\newtheorem{theorem}{Theorem}[section]
\newtheorem{lemma}[theorem]{Lemma}
\newtheorem{proposition}[theorem]{Proposition}
\newtheorem{corollary}[theorem]{Corollary}
\theoremstyle{definition}
\newtheorem{definition}[theorem]{Definition}
\newtheorem{remark}[theorem]{Remark}
\newtheorem{ltheorem}{Theorem} 
\def\real{\mathbb{R}}
\def\rational{\mathbb{Q}}
\def\integer{\mathbb{Z}}
\def\natural{\mathbb{N}}
\def\supp{\operatorname{supp}}
\def\d{\operatorname{dist}}
\def\dim{\operatorname{dim}}
\def\id{\operatorname{id}}
\def\cF{\mathcal{F}}
\def\cW{\mathcal{W}}
\def\rels{\sim^s}
\def\relu{\sim^u}
\def\cH{\mathcal{H}}
\def\cP{\mathcal{P}}
\def\cE{\mathcal{E}}
\def\cM{\mathcal{M}}
\def\cN{\mathcal{N}}
\def\nB{\mathbf{B}}
\def\nA{\mathbf{A}}
\def\quand{\quad\text{and}\quad}
\def\hA{\hat{A}}
\def\tmu{\tilde{\mu}}
\def\proj{\mathbb{R}P^{2d-1}}
\def\SL{SL(2,\real)}
\def\Sp{Sp(2d,\real)}
\def\G{\mathcal{G}}
\def\cH{\mathcal{H}}
\def\cP{\mathcal{P}}
\def\cE{\mathcal{E}}
\def\cM{\mathcal{M}}
\def\cN{\mathcal{N}}
\def\cB{\mathcal{B}}
\def\nB{\mathbf{B}}
\def\nA{\mathbf{A}}
\def\tB{\tilde{B}}
\def\tf{\tilde{f}}
\def\tM{\tilde{M}}
\def\tp{\tilde{p}}
\def\tz{\tilde{z}}
\def\tx{\tilde{x}}
\def\ty{\tilde{y}}
\def\tm{\tilde{m}}
\newcommand{\norm}[1]{{\left\lVert  #1  \right\rVert}}
\newcommand{\abs}[1]{{\left\lvert  #1  \right\rvert}}
\title[Positive Lyapunov exponents]
{Stably positive Lyapunov exponents for symplectic linear cocycles over partially hyperbolic diffeomorphisms}
\author{Mauricio Poletti}
\subjclass[2010]{37H15,37D30,37D25}
\keywords{Lyapunov Exponents, Partially Hyperbolic Diffeomorphism, Linear Cocyles}
\address{LAGA -- Universit\'e Paris 13, 99 Av. Jean-Baptiste Cl\'ement, 93430 Villetaneus, France.}
\email{mpoletti@impa.br}
\begin{document}

\begin{abstract}
We consider $\Sp$ cocycles over two classes of partially 
hyperbolic diffeomorphisms: having compact center leaves and  time one maps of Anosov flows. We prove that the Lyapunov exponents are non-zero in an open and dense set in the H\"older topology.
\end{abstract}

\maketitle


\section{Introduction}

Positiveness of Lyapunov exponents was widely studied in the past years,
some natural questions that are trying to be answered are:
\begin{itemize}
\item \emph{Positive Lyapunov exponent are common?} 
\item \emph{What is the generic behaviour, positive exponents or zero exponents?}
\item \emph{Are positive exponents stable?}
\end{itemize}

Avila \cite{Av11} proved for several topologies, including $C^r$ topology for $r\geq 0$, 
that there exists a dense, but not necessarily open or generic, set of cocycles taking values in $\SL$ with non zero Lyapunov exponents. This was generalized to $\Sp$ cocycles by Xu \cite{Xu15}. 

A result stated by Ma\~n\'e and proved by Bochi \cite{Boc02} expose that the generic behaviour in the $C^0$ topology is not positiveness of Lyapunov exponents unless you have a strong hyperbolicity:
$C^0$ generically $\SL$ cocycles are uniformly hyperbolic or have zero Lyapunov exponents.

In the case of more regular cocycles with some hyperbolic behaviour on the base map and bunching condition on the fibers, the generic behavior changes radically. Viana~\cite{Almost} proved that, when the base maps are non-uniformly hyperbolic and the cocycle take values in the group $SL(d,\real)$, the Lyapunov exponents are generically
non-zero. This was generalized by Bessa-Bochi-Cambrainha-Matheus-Varandas-Xu~\cite{BBCMVX} for any non-compact semi-simple Lie group (for example $\Sp$). 

These results were extended in the partially hyperbolic setting when the map is volume preserving, accessible and the cocycle takes values in $SL(d,\real)$ by Avila-Viana-Santamaria~\cite{ASV13}.

Here we deal with $\Sp$ cocycles over a different class of partially hyperbolic maps, not contained in the previous results: they are not volume preserving neither accessible and also are not non-uniformly hyperbolic (it has zero center Lyapunov exponent).

Specifically we deal with two types of maps, partially hyperbolic with compact center leaves and time one maps of Anosov flows that preserves measures with some product structure (for example SRB measures for an Anosov flow). We prove that linear cocycles over this class of maps have positive Lyapunov exponents in an open and dense set. 
We postpone the precise statements for section~\ref{s.statement}.

We also discuss about continuity of the Lyapunov exponents for these partially hyperbolic maps when the cocycle takes values in $\SL$. Continuity of the Lyapunov exponents was proved for hyperbolic base maps in \cite{BBB}.

An important technical tool in the proof of our main theorem is the closedness of $s$ and $u$-states for cocycles over partially hyperbolic maps.
In many works closedness of $s$ or $u$-states has been proved and used for specific cases (\cite{BBB}, \cite{Extremal}, \cite{ASV13}).
As we want to give a more general proof of this fact for general partially hyperbolic maps without any extra conditions on invariant measure of the base map and for smooth cocycles, that is an interesting result on his own, we postpone this proof for the appendix.

\subsection{Organization of the paper}
In sections~\ref{s.linear.cocycles} and \ref{s.PH} we recall some definitions of linear cocycles, Lyapunov exponents, partial hyperbolicity and we present the two classes of partially hyperbolic that we consider.

In section~\ref{s.statement} we give the precise statements of our results, in section~\ref{s.holonomies} and \ref{s.measure} we introduce some technical tools, and the concept of holonomy invariant disintegrations. 

We work separately with the disintegrations for partially hyperbolic maps with center leaves in section~\ref{s.disint.ccl} and for time one maps in \ref{s.disint.tom}.
In Section~\ref{s.technical} we recall some technical results already known that we prove for completeness.

In section~\ref{s.proof.teop}, section~\ref{s.proofAB} and \ref{s.proof.continuity} we conclude the proofs of the theorems.

In the appendix~\ref{appendix} we give a proof of the closedness of $s$ and $u$-states for general partially hyperbolic maps.
\medskip

\textbf{Acknowledgements.} Thanks to M. Viana for the orientation, E. Pujals, C. Matheus and Disheng Xu for the discussions and useful ideas. L. Backes, F. Lenarduzzi, K. Marin and A. Sanchez for the useful commentaries.

\section{Linear Cocycles}\label{s.linear.cocycles}
Let $\G$ be a linear subgroup of $GL(d,\real)$, for some $d\in\natural$, the \emph{linear cocycle} defined by a measurable matrix-valued function $A:M\rightarrow \G$ over 
an invertible measurable map $f:M\rightarrow M$ is the (invertible) map $F_A:M\times \real^d\rightarrow M\times \real^d$ 
given by
\begin{equation}
\nonumber F_A\left(x,v\right)=\left(f(x),A(x)v\right).
\end{equation}
Its iterates are given by $F^n_A\left(x,v\right)=\left(f^n(x),A^n(x)v\right)$ where
\begin{equation*}\label{def:cocycles}
A^n(x)=
\left\{
	\begin{array}{ll}
		A(f^{n-1}(x))\ldots A(f(x))A(x)  & \mbox{if } n>0 \\
		Id & \mbox{if } n=0 \\
		A(f^{n}(x))^{-1}\ldots A(f^{-1}(x))^{-1}& \mbox{if } n<0 \\
	\end{array}
\right.
\end{equation*}
Sometimes we denote this cocycle by $(f,A)$.

Let $\mu$ be an $f$-invariant probability measure on $M$ and suppose that $\log\norm{A}$ and $\log\norm{ A^{-1}}$
are integrable. By Kingman's sub-additive ergodic theorem, see \cite{FET}, 
the limits 
$$\begin{aligned}
   \lambda^+(A,\mu,x)=\lim_{n\to \infty}\frac{1}{n}\log \norm{A^n(x)}\quand \\ 
   \lambda^-(A,\mu,x)=\lim_{n\to \infty}-\frac{1}{n}\log \norm{(A^n(x))^{-1}}.
  \end{aligned}
$$
exist for $\mu$-almost every $x\in M$. When there is no risk of ambiguity we write just 
$\lambda^+(x)=\lambda^+(A,\mu,x)$.

By Oseledets~\cite{Ose68}, at $\mu$-almost every point
$x\in M$ there exist real numbers $\lambda^+(x)=\lambda_1 (x)>\cdots >\lambda_k(x)=\lambda^-(x)$
and a decomposition $\real^d=E^1_{x} \oplus \cdots \oplus E^k_{x}$ into vector subspaces such that
\begin{equation*}
A(x)E^i_{x}=E^i_{f(x)}
\text{ and } \lambda_i(x)=\lim_{\mid n\mid \to \infty}\frac{1}{n}\log\|A^n(x)v\|
\end{equation*}
for every non-zero $v\in E^i_{x}$ and $1\leq i \leq k$.

Let $\Sp$ be the symplectic subgroup of $GL(2d,\real)$, this means that there exists some non-degenerated skew-symmetric bilinear form $\omega:\real^{2d}\times\real^{2d}\to \real$ preserved by the group action, i.e: for every $v,w\in\real^{2d}$, $\omega(A v, A w)=\omega(v,w)$, in particular for dimension $2$, $\SL$ is a symplectic group for the area form $dx\wedge dy$. Observe that any $GL(2,\real)$ cocycle defines a $\SL$ cocycle by taking $\hA=\sqrt{(1/\det A)} A$, so positive exponent for $\hA$ means $\lambda^+(A)>\lambda_-(A)$.

Define $L(A,\mu)=\int \lambda^+ \,d\mu$, we say that $A$ \emph{has positive exponent} if
 $L(A,\mu)>0$. When $\mu$ is ergodic, as we are going to assume later, we have 
$L(A,\mu)=\lambda^+(x)$ for $\mu$-almost every $x\in M$.
 
\section{Partial hyperbolicity}\label{s.PH}
A diffeomorphism $f:M \to M$ of a compact $C^k$, $k>1$, manifold $M$ is said to be \emph{partially hyperbolic} if there
exists a non-trivial splitting of the tangent bundle
\begin{equation*}
TM=E^s\oplus E^c\oplus E^u
\end{equation*}
invariant under the derivative $Df$, a Riemannian metric $\|\cdot\|$ on $M$, and positive
continuous functions $\nu$, $\hat{\nu}$, $\gamma$, $\hat{\gamma}$ with $\nu$, $\hat{\nu}<1$
and $\nu<\gamma<{\hat\gamma}^{-1}<{\hat\nu}^{-1}$ such that, for any unit vector $v\in T_pM$,
\begin{alignat*}{2}
& \|Df(p)v \| < \nu(p) & \quad & \text{if } v\in E^s(p),
 \\
\gamma(p) < & \|Df(p)v \|  < {\hat{\gamma}(p)}^{-1} & & \text{if } v\in E^c(p),
 \\
{\hat{\nu}(p)}^{-1} < & \|Df(p)v\| & &  \text{if } v\in E^u(p).
\end{alignat*}
All three sub-bundles $E^s$, $E^c$, $E^u$ are assumed to have positive dimension. From now on,
we take $M$ to be endowed with the distance $\d:M\times M\to \real$ associated to such a Riemannian structure.

Suppose that $f:M\to M$ is a partially hyperbolic diffeomorphism. The stable and unstable bundles $E^s$ and $E^u$
are uniquely integrable and their integral manifolds form two transverse continuous foliations
$\cW^s$ and $\cW^u$, whose leaves are immersed sub-manifolds of the same class of
differentiability as $f$. These foliations are referred to as the \emph{strong-stable} and
\emph{strong-unstable} foliations. They are invariant under $f$, in the sense that
$$
f(\cW^s(x))= \cW^s(f(x)) \quand f(\cW^u(x))= \cW^u(f(x)),
$$
where $\cW^s(x)$ and $\cW^s(x)$ denote the leaves of $\cW^s$ and $\cW^u$, respectively,
passing through any $x\in M$. 

A partially hyperbolic diffeomorphism $f:M\to M$ is called \emph{dynamically coherent} if there exist
invariant foliations $\cW^{cs}$ and $\cW^{cu}$ with smooth leaves tangent to $E^c \oplus E^s$ and
$E^c \oplus E^u$, respectively. Intersecting the leaves of $\cW^{cs}$ and $\cW^{cu}$ one obtains a center
foliation $\cW^c$ whose leaves are tangent to the center sub-bundle $E^c$ at every point.

\subsection{Class $\nA$: Compact center leaves}\label{ss.compact.leaves}
The first class to be consider is when the center leaves are compact manifolds.

Let $\tM=M/\cW^c$ be the quotient of $M$ by the center foliation and $\pi:M\to \tM$ be the quotient map. 
We say that the center leaves \emph{form a fiber bundle} if for any $\cW^c(x)\in \tM$ there is a neighbourhood 
$V \subset \tM$ of $\cW^c(x)$ and a homeomorphism 
$$h_x : V \times \cW^c(x)\to \pi^{-1}(V)$$
smooth along the verticals $\lbrace d\rbrace \times \cW^c(x)$ and mapping each vertical onto the corresponding center leaf $d$.

\begin{remark} If $f$ is a volume preserving, partially hyperbolic, dynamically coherent diffeomorphism in dimension 3 whose center foliation is absolutely continuous and whose generic center leaves 
are circles, then, according to Avila, Viana and Wilkinson \cite{AVW11}, all center
leaves are circles and they form a fiber bundle up to a finite cover. 
\end{remark}

We define the \emph{center Lyapunov exponents} of $f:M\to M$ as 
$$\lambda^c(f)^+(x)=\lim\frac{1}{n}\log \norm{Df^n\mid_{E^c(x)}}$$
and 
$$\lambda^c(f)^-(x)=\lim-\frac{1}{n}\log \norm{{Df^n\mid_{E^c(x)}}^{-1}}.$$

Again by the Oseledets theorem, this limits exist for $\mu$-almost every $x\in M$. 
We say that $f:M\to M$ has \emph{zero center Lyapunov exponent} if $\lambda^c(f)^+(x)=\lambda^c(f)^-(x)=0$ for $\mu$-almost every $x\in M$.

\begin{remark}
When all the center Lyapunov exponents of $f$ are non-zero, the problem falls in the hypothesis of Viana \cite{Almost}. In this work we deal with the case of zero center Lyapunov exponents, where the previous techniques fails. 
\end{remark} 

The fiber bundle condition gives that the quotient $\tM=M/\cW^c$ is a topological manifold, and the induced map 
$\tf:\tM\to \tM$ is a \emph{hyperbolic homeomorphism} in the following sense:
\begin{definition}
 Given any $\tx\in \tM$ and $\epsilon >0$, we define the
\emph{local stable} and \emph{unstable sets} of $\tx$ with respect to $\tf$ by
\begin{align*}
  W^s_{\epsilon} (\tx) &:= \left\{\ty\in \tM : \d_{\tM}(\tf^n(\tx),\tf^n(\ty))\leq\epsilon ,\ \forall
    n \geq 0\right\}, \\
  W^u_{\epsilon } (\tx) &:= \left\{\ty\in \tM : \d_{\tM}(\tf^n(\tx),\tf^n(\ty))\leq\epsilon ,\ \forall
    n \leq 0\right\},
\end{align*}
respectively.
We say that a homeomorphism $\tf:\tM \to \tM$ is \emph{hyperbolic}
  whenever there exist constants $C,\epsilon ,\tau>0$ and $\lambda\in
  (0,1)$ such that the following conditions are satisfied:

  \begin{itemize}
  \item $\; \d_{\tM}(\tf^n(y_1),\tf^n(y_2)) \leq C\lambda^n \d_{\tM}(y_1,y_2)$,
    $\forall \tx\in \tM$, $\forall y_1,y_2 \in W^s_{\epsilon } (x)$, $\forall
    n\geq 0$;

  \item $\; \d_{\tM}(\tf^{-n}(y_1), \tf^{-n}(y_2)) \leq C\lambda^n
    \d_{\tM}(y_1,y_2)$, $\forall \tx\in \tM$, $\forall y_1,y_2 \in W^u_{\epsilon } (\tx)$,
    $\forall n\geq 0$;

  \item If $\d_{\tM}(\tx,\ty)\leq\tau$, then $W^s_{\epsilon }(\tx)$ and
    $W^u_{\epsilon }(\ty)$ intersect in a unique point which is denoted by
    $[\tx,\ty]$ and depends continuously on $\tx$ and $\ty$. 
  \end{itemize}
\end{definition}
Let $\mu$ be an $f$-invariant ergodic measure, define $\tmu=\pi_{\ast}\mu$, we say that $\mu$ has \textit{projective product structure} if there exist measures $\mu^s$ 
on $W^s_{\epsilon}(\tx)$ and 
$\mu^u$ on $W^u_{\epsilon}(\tx)$ such that locally $\tmu\sim \mu^s\times \mu^u$ 
(see definition~\ref{def.prod.structure}), we also assume $\supp(\tmu)=\tM$.

From now on we refer to these partially hyperbolic maps as \emph{class $\nA$}.

\subsection{Class $\nB$: time one map of Anosov flows}\label{ss.time.one}
The second class of partially hyperbolic maps that we consider are the \emph{time one maps of Anosov flows}. 

We say that a flow $\phi_t:M\to M$ is an Anosov flow if there exists a splitting 
$$
TM=E^s\oplus X\oplus E^u,
$$
where $X(p)=\frac{d }{dt}\phi_t(p)\mid_{t=0}$, invariant under the derivative $D\phi_t$, where $E^s$ is contracted and $E^u$ is expanded by the derivative of $\phi_t$.

Let $f:M\to M$ be the time one map of a $C^2$ Anosov flow (i.e: $f=\phi_1$). This map is partially hyperbolic with center bundle $E^c=X$.

In this particular case the center bundle $E^c=X$ is integrable and $C^1$ (observe that $\cW^c(p)=\phi_\real(p)$).  
The bundles $E^c+E^u$ and $E^s+E^c$ are integrable and absolutely continuous (See \cite{3flows}). We call them center stable $\cW^{cs}$ and center unstable $\cW^{cu}$ foliations.

We say that a measure $\mu$ is an \emph{SRB measure}
for the flow if the disintegration of the measure along the center unstable leaves is in the Lebesgue class along these sub-manifolds (for a more detaills see \cite{3flows}). Observe that we are asking for the measure to be invariant for the flow not just for the time one map.

Recall that by the spectral decomposition theorem the non-wandering set of the flow can be decomposed into basic sets $\Omega(\phi_t)=\Delta_1\cup\cdots \cup \Delta_n$, where the $\Delta_i$ are invariant and the restriction $\phi_t$ is transitive in these sets, in particular the support of every ergodic measure is supported in one of these sets.

Also these basic sets coincide with the closure of the homoclinic class of its periodic points, as the support of an SRB measures is saturated by the center unstable foliation this implies that $\supp(\mu)=\Delta_i$ for some $i\in \{1,\dots,n\}$, moreover this set is an attractor.
Conversely, every non trivial attractor of an hyperbolic flow admits some  SRB measure. 

From now on we refer to these partially hyperbolic maps as \emph{class $\nB$}.
 \section{Statements}\label{s.statement}

Let us denote by $H^{\alpha}(M)$ the space of $\alpha$-H\"older continuous maps $A:M\to \Sp$ endowed with the $\alpha$-H\"older topology which is generated by norm
 \begin{displaymath}
 \norm{A}_{\alpha}:= \sup _{x\in M} \norm{A(x)} + \sup _{x\neq y} \dfrac{\norm{A(x)-A(y)}}{\d(x,y)^{\alpha}}.
\end{displaymath}

A cocycle generated by an $\alpha$-H\"older function $A:M\to \Sp$ is \textit{fiber-bunched} 
if there exists $C_3>0$ and $\theta <1$ such that
\begin{equation}\label{eq.FB}
\norm{A^n(x)}\norm{A^n(x)^{-1}}\min\lbrace \nu(x),\hat{\nu}(y)\rbrace ^{n \alpha}\leq C_3 \theta ^n 
\end{equation}
for every $x\in M$ and $n\geq 0$.

Our first result, for partially hyperbolic maps of class $\nA$ is
\begin{ltheorem}\label{t.compact.leaves}
Let $f:M\to M$ be a partially hyperbolic with compact center leaves that form a fiber bundle and let 
$\mu$ be an $f$-invariant ergodic measure with zero center Lyapunov exponent and projective product structure. 
 Then the Lyapunov exponents relative to $\mu$ are non-zero in an open and dense subset of the fiber-bunched cocycles of
 $H^{\alpha}(M)$.
\end{ltheorem}

The second result is an analogous for class $\nB$
\begin{ltheorem}\label{t.time.one}
Let $f:M\to M$ be a $C^2$ time one map of an Anosov flow and $\mu$ an SRB invariant measure for the flow, then $L(A,\mu)>0$ in an open and dense subset of the fiber bunched cocycles of $H^\alpha(M)$.
\end{ltheorem}

These theorems will be consequence of:
\begin{ltheorem}\label{teoprincipal}
Let $f:M\to M$ be a partially hyperbolic map of class $\nA$ or $\nB$. 
Let $A:M \to \Sp$ be a fiber bunched cocycle
such that the restriction to some periodic compact center leaf $\cW^c(p)$, intersecting the support of $\mu$, of $f$ has positive Lyapunov exponents. 
Then $A$ is accumulated by open sets of cocycles with positive Lyapunov exponents.
\end{ltheorem}

In the theorem above, for the restriction of the cocycle to the periodic center leaf we take the natural invariant measure: 
\begin{itemize}
\item given by the disintegration in center leaves for class $\nA$,
\item the invariant measure equivalent to Lebesgue in the orbits of the flow for class $\nB$, 
\end{itemize}
this will be explained in more detail in section~\ref{s.measure}.

We also study sub-sets of continuity of the Lyapunov exponents for $\G=\SL$.
\begin{definition}
 Given an invertible measurable map $g:N\to N$ an invariant measure $\eta$ (not necessarily ergodic) and an integrable cocycle $A:N\to \SL$ 
 we say that $A$ is a \emph{continuity point for the Lyapunov exponents} 
 if for every $\{A_k\}_k$ converging to $A$ 
 we have that $\lambda^+_{A_k}:N\to \real$ converges in measure 
 to $\lambda^+_A:N\to \real$.
 
 Observe that as $A_k\to A$ this implies that $\sup_k \norm{A_k}$ is bounded and consequently 
 $\lambda^+_{A_k}$ is also bounded. Thus, since we are dealing with probability measures, 
 convergence in measure is equivalent to convergence in $L^1_\eta$.
\end{definition}

When the measure is ergodic this definition coincide with the classical definition of continuity, namely $L(A_k,\mu)\to L(A,\mu)$ whenever $A_k\to A$. An interesting question 
 is if in this context the Lyapunov exponents are continuous. 

In general this is not true. For example, take a volume preserving Anosov diffeomorphism $g:\mathbb{T}^2\to \mathbb{T}^2$, $\theta\in \real$ irrational and define
 $$ 
f:\mathbb{T}^3\to \mathbb{T}^3,\quad 
 f(x,y,t)=(g(x,y),t+\theta),
 $$
 this is a volume preserving partially hyperbolic diffeomorphism. By Wang-You \cite{WaY13} there exist discontinuity points for the Lyapunov exponents for monotonic cocycles in the smooth topology, let $A:\mathbb{T}^1\to \mathbb{T}^1$ be one of this with $t\mapsto t+\theta$. Then it is easy to see that the cocycle $\hA:\mathbb{T}^3\to\SL$, $\hA(x,y,t)=A(t)$ over $f$ is a discontinuity point for the Lyapunov exponents.
 
With an additional condition in a center leave we can find open sets of continuity.

\begin{ltheorem}\label{teo.continuity}
Let $f:M\to M$ be a partially hyperbolic of class $\nA$ or $\nB$. 
Let $A:M \to \SL$ be a fiber bunched cocycle whose restriction to some compact periodic center leaf, intersecting the support of $\mu$, has
positive Lyapunov exponent and is a continuity point for the Lyapunov exponents.
Then $A$ is accumulated by open sets restricted to which the Lyapunov exponents vary continuously.
\end{ltheorem}

An interesting question is whether these hypotheses are open or generic in some topology. 
 
Following Avila-Krikorian \cite{AvKr13}, given $\epsilon>0$, we say that a function $f:I\to \real$ is $\epsilon$-\emph{monotonic} 
if 
$$\frac{\abs{f(x)-f(y)}}{\abs{x-y}} >\epsilon.$$ 

This definition extends to functions defined on $S^1=\real/\integer$ and taking values on $S^1$ by considering the standard lift. 
We say that $A:S^1\to \SL$, is $\epsilon$-\emph{monotonic} if for every $w\in \real^2 \setminus \{0\}$ the function
$t\mapsto \frac{A(t)\cdot w}{\norm{A(t)\cdot w}}$ is $\epsilon$-monotonic.

Avila-Krikorian proved in \cite{AvKr13} that the Lyapunov exponents are continuous functions of $\epsilon$-\emph{monotonic} cocycles.
 Moreover, the set of $\epsilon$-monotonic cocycles is an open subset of $H^1(S^1)$.

Thus as corollary we have:
\begin{theorem}\label{teo.example.cont}
 Let $f:M\times S^1\to M\times S^1$ be a skew-product of the form $f(x,t)=(g(x),t+\theta(x))$, where 
 $g:M\to M$ is an Anosov diffeomorphism such that there exists a $g$-periodic point $p\in M$ with 
 $\theta(p)\in \real\setminus \rational$, and let $\mu$ be an $f$-invariant ergodic measure with projective product structure.
  Let $H^{\epsilon}_p\subset H^{1}(M\times S^1)$ be the open set of cocycles 
 $B:M\times S^1\to \SL$ such that $B(p,\cdot):S^1\to \SL$
 is $\epsilon$-monotonic. Then the Lyapunov exponents vary continuously in an open and dense subset of $H^{\epsilon}_p$.
\end{theorem}

We also have an equivalent result for time one maps:

\begin{theorem}\label{t.continuity.B}
 Let $f:M\to M$ be the time one map of an Anosov flow and $\mu$ an invariant SRB measure. Suppose there exists some periodic compact center circle, $S^1_p$, in the  in the support of $\mu$ with irrational rotation number.
  Let $H^{\epsilon}_p\subset H^{1}(M)$ be the open set of cocycles 
 $B:M\to \SL$ such that $B\mid_{S^1_p}:S^1\to \SL$
 is $\epsilon$-monotonic. Then the Lyapunov exponents vary continuously in an open and dense subset of $H^{\epsilon}_p$.
\end{theorem}

For volume preserving and accessible partially hyperbolic maps Liang, Marin and Yang~\cite{LMY}, proved that there exists an open and dense set of points of continuity for the Lyapunov exponents. 

\section{Invariant holonomies}\label{s.holonomies}
At this section we introduce the key notion that we are going to use in the proof of Theorem \ref{teoprincipal}, namely invariant holonomies.

Suppose that $f:M\to M$ is partially hyperbolic. 
 The stable and unstable foliations are, usually, \emph{not} transversely smooth:
the holonomy maps between any pair of cross-sections are not even Lipschitz continuous,
in general, although they are always $\gamma$-H\"older continuous for some $\gamma>0$.
Moreover, if $f$ is $C^2$ then these foliations are absolutely continuous, in the following
sense.

Let $d=\dim M$ and $\cF$ be a continuous foliation of $M$ with $k$-dimensional smooth
leaves, $0<k<d$. Let $\cF(p)$ be the leaf through a point $p\in M$ and $\cF(p,R)\subset\cF(p)$
be the neighbourhood of radius $R>0$ around $p$, relative to the distance defined by the
Riemannian metric restricted to $\cF(p)$. A \emph{foliation box} for $\cF$ at $p$ is the
image of an embedding
$$
\Phi:\cF(p,R) \times \real^{d-k} \to M
$$
such that $\Phi(\cdot,0)=\id$, every $\Phi(\cdot, y)$ is a diffeomorphism from $\cF(p,R)$
to some subset of a leaf of $\cF$ (we call the image a \emph{horizontal slice}),
and these diffeomorphisms vary continuously with $y \in \real^{d-k}$.
Foliation boxes exist at every $p\in M$, by definition of continuous foliation with smooth leaves.
A \emph{cross-section} to $\cF$ is a smooth co-dimension-$k$ disk inside a foliation box that
intersects each horizontal slice exactly once, transversely and with angle uniformly bounded from zero.

Then, for any pair of cross-sections $\Sigma$ and $\Sigma'$, there is a well defined \emph{holonomy map}
$\Sigma\to\Sigma'$, assigning to each $x\in\Sigma$ the unique point of intersection of $\Sigma'$
with the horizontal slice through $x$. The foliation is \emph{absolutely continuous} if all these
homeomorphisms map zero Lebesgue measure sets to zero Lebesgue measure sets.
That holds, in particular, for  the strong-stable and strong-unstable foliations of partially hyperbolic
$C^2$ diffeomorphisms and, in fact, the Jacobians of all holonomy maps are bounded by a uniform constant.

If $f:M\to M$ is of class $\nB$, his center stable and center unstable foliations are also absolutely continuous.
If we take two center manifolds $\cW^c(p)$ and $\cW^c(z)$ in the same strong stable manifold, from every point $t\in \cW^c(p)$ we can define the stable holonomy locally $h^s_{p,z}:I\subset \cW^c(p)\to \cW^c(z)$ and the Jacobian of $h^s_{p,z}$ vary continuously with the points $p$ and $z$. Actually the Jacobian is given by 
\begin{equation}\label{eq.jacobian.h}
Jh^s_{p,z}=\lim_{n\to\infty}\frac{Jf^n_c(t)}{Jf^n_c(h^s_{p,z}(t))},
\end{equation}
where $Jf^n_c(t)=\det Df^n\mid_{E^c_t}(t)$.
Analogously for the unstable holonomies.

When $f$ is of class $\nA$, we can extend the stable and unstable holonomies to be defined as a map from a entire center manifold to another:
\begin{definition}\label{d.honomy.ccl}
Given $\tx\in \tM$ and $\ty\in\tM$, such that $\ty\in W^s(\tx)$, we can define the \emph{stable holonomy}
$h^s_{\tx,\ty}:\cW^c(\tx)\to \cW^c(\ty)$ as the map which assigns to each $t\in \cW^c(\tx)$ $h^s_{\tx,\ty}(t)$ 
the first intersection between $\cW^s(t)$ and $\cW^c(\ty)$. 
Analogously, for $\tz\in W^u(\tx)$ we define the \emph{unstable holonomy} 
$$h^u_{\tx,\tz}:\cW^c(\tx)\to \cW^c(\tz)$$
changing stable by unstable manifolds.
\end{definition}

\subsection{linear holonomies}\label{ss.linear.holonomies}
We say that $A$ admits \textit{strong stable holonomies} if there exist, for every $p\textrm{ and }q \in M$ with $p\in \cW^{s}_{loc}(q)$,
 linear transformations $H^{s,A}_{p,q}:\real^{2d}\to \real^{2d}$ 
with the following properties:
\begin{enumerate}
\item $H^{s,A}_{f^j(p),f^j(q)}=A^j(q)\circ H^{s,A}_{p,q}\circ A^j(p)^{-1}$ for every $j\geq 1$,
\item $H^{s,A}_{p,p}=id$ and, for $w\in \cW^s(p)$, $H^{s,A}_{p,q}=H^{s,A}_{w,q}\circ H^{s,A}_{p,w}$,
\item there exists some $L>0$ (that does not depend on $p,q$) such that $\lVert H^{s,A}_{p,q}-id \rVert \leq L\d(p,q)^{\alpha}$.
\end{enumerate}
These linear transformations are called \textit{strong stable holonomies}.

Analogously if $p\in \cW^{u}_{loc}(q)$ we have the \textit{strong unstable holonomies}. When there is no ambiguity we write $H^{s}_{p,q}$ instead of $H^{s,A}_{p,q}$.
\begin{remark}
The fiber bunched condition \eqref{eq.FB} implies the existence of the strong stable and strong unstable holonomies (see \cite{ASV_ORIG}). 
\end{remark}

Let $\proj$ be the real projective space, i.e: the quotient space of $\real^{2d}\setminus\{0\}$ by the equivalence 
relation $v\sim u$ if there exist $c\in\real\setminus\{0\}$, such that, $v=c u$. Every invertible linear transformation $B$ induce a
projective transformation, that we also denote by $B$, 
$$B:\proj\to \proj \quand B\left[v\right]=\left[B(v)\right].$$ 
We also denote by $F:M\times \proj \to M\times \proj$ the induced projective cocycle.

\section{Measure disintegration}\label{s.measure}

Given a measurable partition $\cP$ of $M$, by Rokhlin disintegration theorem (see \cite{FET}) there exists a family of measures $\{\mu_P\}_{P\in \cP}$ such that for every measurable set $X\in M$ 
\begin{itemize}
\item $P\to \mu_P(X)$ is measurable,
\item $\mu_P(X)=1$ and 
\item $\mu(X)=\int_M \mu_P (X) d\tilde{\mu}(P)$.
\end{itemize}
Moreover, such disintegrantion is essentially unique \cite{Rok62}.

In general the partition by the invariant foliations is not measurable, to overcome this problem we disintegrate our measure locally as we explain now.

Denote by $s$, $u$ and $c$ the dimensions of $E^s$, $E^u$ and $E^c$ respectively. We call $C_x\in M$ a foliated box centred in $x\in M$ if there exists a continuous function 
$\Phi:[-1,1]^d\to C$ such that 
\begin{itemize}
\item $\Phi(0)=x$,
\item for every $y\in [-1,1]^{d-c}$, 
$$
\cW^c_{C_x}(\Phi(y,0)):=\Phi(y,[-1,1]^c)\subset \cW^c(\Phi(y,0)),
$$
\item for every $y\in [-1,1]^u$, 
$$\cW^{cs}_{C_x}(\Phi(0_s,y,0)):=\Phi([-1,1]^s\times \{y\} \times [-1,1]^c)\subset \cW^{cs}(\Phi(0_s,y,0_c)),$$
\item  for every $x\in [-1,1]^s$, 
$$
\cW^{cu}_{C_x}(\Phi(x,0_u,0)):=\Phi(\{x\}\times [-1,1]^u \times [-1,1]^c)\subset \cW^{cu}(\Phi(x,0_u,0_c)),
$$
\end{itemize}
where $0_i$ is the zero vector of $R^i$. 

Given a foliated box $C_x$ lets call $\Sigma_x=\Phi([-1,1]^{d-c}\times\{0_c\})$ and $\pi:C_x\to \Sigma_x$ the natural projection given by $\pi(\Phi(x_s,y_u,z_c))=\Phi(x_s,y_u,0_c)$, observe that $\Sigma_x$ has a product structure $\Sigma^s\times \Sigma^u$ given by $\Phi$.

\begin{remark}\label{r.center.disint}
When $f$ is of class $\nA$, we can actually take 
$\Phi:[-1,1]^{d-c}\times \cW^c(x)\to C$, because the center foliation form a fiber bundle.

Also, in this case the partition by center leaves is a measurable partition, so using Rokhlin disintegration we can define a family of measures $\tx\mapsto \mu^c_{\tx}$, such that $\mu^c_{\tx}$ is concentrated in $\cW^c(\tx)$ and 
$$
\mu=\int \mu^c_{\tx} d\tmu(\tx).
$$
Moreover, we are going to prove (see section~\ref{s.disint.ccl}) that $\tx\to \mu^c_{\tx}$ is continuous and defined everywhere.
\end{remark}

\begin{definition}\label{def.prod.structure}
We say that $\mu$ has \emph{projective product structure} if $\pi_*\mu\mid C_x$ is absolutely continuous with a product measure $\mu^s\times \mu^u$, where $\mu^*$ is a measure on $\Sigma^*$ for $*=s,u$.
\end{definition}

\begin{remark}
When $f$ is the time one map of an Anosov flow, $\mu$ is an SRB measure for the flow if and only if for every $x$ the disintegration of $\mu\mid C_x$ given by the partition $\{\cW^{cu}_{C_x}(y)\}_{y\in C_x}$ gives measures absolutely continuous with respect to the Lebesgue measure on $\cW^{cu}(y)$.

Observe that in this case the partition in center manifolds (orbits of the flow) is not measurable, but because the measure is invariant by the flow, the local disintegration in the center leaves in a foliated box is actually absolutely continuous with respect to Lebesgue.
\end{remark}

If we have some periodic compact center leaf $\cW^c(p)$ of period $k$, we have a natural $f^k$-invariant measure $\mu^c_p$ supported in this leaf, for the class $\nA$ is just $\mu^c_{\tp}$ and for the class $\nB$ is the absolutely continuous measure induced by the flow.
%

\subsection{Invariance principle}
Let $P:M\times \proj\to M$ be the projection to the first coordinate, $P(x,v)=x$, and let $m$ be an $F$-invariant measure on 
$M\times \proj$ such that $P_{\ast}m=\mu$.
By Rokhlin \cite{Rok62} we can disintegrate the measure $m$ into $\lbrace m_x\mbox{, }x \in M\rbrace$ with
 respect to the partition 
 $$
 \mathcal{P}=\lbrace \{p\}\times \proj\mbox{, }p\in M\rbrace,
 $$

A measure $m$ that projects on $\mu$ is called:
\begin{itemize}
 \item[($a$)] $s$-\emph{invariant} if there exists a full $\mu$-measure subset $M^s\subset M$ such that $m_z=(H^s_{y,z})_{\ast}m_y$ for every $y,z\in M^s$ in the same strong-stable leaf;
 \item[($b$)] $u$-\emph{invariant} if there exists a full $\mu$-measure subset $M^u\subset M$ such that $m_z=(H^u_{y,z})_{\ast}m_y$ for every $y,z\in M^u$ in the same strong-unstable leaf.
\end{itemize}
If both $a$ and $b$ are satisfied we say that $m$ is $su$-\emph{invariant}.

We recall the following result whose proof can be found in \cite{ASV13}.
\begin{theorem}[Invariance Principle]\label{t.IP}
Let $f:M\to M$ be a $C^1$ partially hyperbolic diffeomorphism and $A:M\to \G$ a linear cocycle defined over $f$. If $\mu$ is an $f$-invariant measure, such that $\lambda^+(A,\mu,x)=\lambda^-(A,\mu,x)=0$ almost everywhere, then every $F_A$-invariant measure $m$ such that $P_* m=\mu$ is $su$-invariant.
\end{theorem}

We say that a disintegration of $m$ is \emph{$u/c$-invariant} if, for every $x$ and $y$ in the support of $\mu$ and in the same unstable manifold, $\mu^c_x$-almost every $t\in \cW^c(x)$ (Lebesgue almost every for class $\nB$), 
$$
{H^u_{t,h(t)}}_* m_t=m_{h(t)}\text{ where }h(t)=\cW^c(y)\cap \cW^u(t).
$$  
Analogously, we call a disintegration \emph{$s/c$-invariant} changing unstable by stable. We say that the disintegration is \emph{$su/c$-invariant} if the disintegration is both $u/c$ and $s/c$-invariant. This property is going to play a major role in our argument so we now prove that if $m$ is $su$-invariant, then $m$ admits a disintegration $su/c$-invariant.

We separate the argument for each class of partially hyperbolic map that we consider.

\section{Disintegration for class $\nA$}\label{s.disint.ccl}
In this section we deal with the disintegration of $m$ when $f$ is of class $\nA$, as described in section~\ref{ss.compact.leaves}. 

We have two disintegrations, one of $m$ with respect to $\cP$, as defined in the previous section, and one of $\mu$ with respect to $\tilde{\mathcal{P}}=\lbrace \cW^c(\tp)\mbox{, }\tp\in \tM \rbrace$, as explained in remark~\ref{r.center.disint}

 We introduce a third disintegration $\lbrace \tm_{\tx}\mbox{, }\tx \in \tM\rbrace$ with respect to the partition 
 $\hat{\mathcal{P}}=\lbrace \cW^c(p)\times \proj\mbox{, }p\in M\rbrace$.

It is easy to see that 
$$\tm_{\tx}=\int m_x d\mu^c_{\tx},$$
see \cite[exercise~5.2.1]{FTEp}.

Because $\pi_{\ast}\mu$ has local product structure and $f$ has zero center Lyapunov exponent, by the Invariance Principle for smooth cocycles(\cite[Proposition~4.8]{Extremal}), there exists a continuous disintegration
 $\{\mu^c_{\tx}\}$ which is $su$-invariant everywhere in the support of $\tmu$. This means that:
\begin{itemize}
 \item[(a')] $\mu^c_{\tz}=(h^s_{\ty,\tz})_{\ast}\mu^c_{\ty}$ for every  $\ty,\tz\in \tM$ in the same stable set,
 \item[(b')] $\mu^c_{\tz}=(h^u_{\ty,\tz})_{\ast}\mu^c_{\ty}$ for every  $\ty,\tz\in \tM$ in the same unstable set.
\end{itemize}

Denote by $$f_{\tx}:\cW^c(\tx) \to \cW^c(\tf(\tx))$$
the restriction of $f$ to $\cW^c(\tx)$. 
By the $f$-invariance of $\mu$ we have that  
$${f_{\tx}}_{\ast}\mu^c_{\tx}=\mu^c_{\tf(\tx)}$$
 for $\tmu$-almost every $\tx\in\tM$. From the continuity
of the disintegration it follows that, this relation is true for every $x\in\supp \tmu$. 
So, for every $\tf^k$-fixed point $\tp\in \tM$ 
 $$
 {f^k_{\tp}}_{\ast}\mu^c_{\tp}=\mu^c_{\tp}.
 $$

 For $\ty\in W^u(\tx)$ define the holonomy $H^{u}_{\tx,\ty}:\cW^c(\tx)\times \proj \to \cW^c(\ty)\times \proj$ by
\begin{equation}\label{eq.uholonomy}
H^u_{\tx,\ty}(t,v)=\left(h^u_{\tx,\ty}(t), H^u_{(\tx,t),(\ty,h^u_{\tx,\ty}(t))}v\right). 
\end{equation}

\begin{proposition}\label{p.sudisintA}
If $m$ is $su$-invariant then $m$ admits a continuous disintegration $\lbrace\tm_{\tx},\tx\in\tM\rbrace$
in $\supp(\tmu)$ with respect to $\hat{\cP}$ that is $su/c$-invariant.
\end{proposition}
\begin{proof}
We need to prove that for every $\tx \in \supp(\tmu)$ and $\ty \in \supp(\tmu)$ in the same
 $\ast$-leaf, $\ast \in \lbrace s,u \rbrace$, 
$$({H^*_{(\tx,x),(\ty,h^*_{\tx,\ty}(x))}})_{\ast}m_x=m_{h^*_{\tx,\ty}(x)}\mbox{ for }\mu^c_{\tx}\mbox{-almost every }x\in \cW^c_{\tx}.$$

Let us prove for $*=u$, the case $*=s$ is analogous. 
Take $\tx \in \tM$ and $\ty \in \tM$ in the same unstable leaf and such that $\mu^c_{\tx}$-almost every $x\in \cW^c(\tx)$ belongs to $M^u$ and
 $\mu^c_{\ty}$-almost every $y\in \cW^c(\ty)$ belongs to $M^u$. Then
\begin{equation*}
 ({H^u_{\tx,\ty}})_{\ast} \tm_{\tx}(B)=\int m_x\left({H^u_{\tx,\ty}}^{-1}(B)\right) d\mu^c_{\tx}(x),
\end{equation*}
Considering $B_y=B\cap \{y\}\times \proj$ we have
\begin{equation}\label{eq.invariance}
\begin{aligned}
 ({H^u_{\tx,\ty}})_{\ast} \tm_{\tx}(B)  &=		\int m_x\left({H^u_{(\tx,x),(\ty,h^u_{\tx,\ty}(x))}}^{-1}(B_{h^u_{\tx,\ty}(x)}\right) d\mu^c_{\tx}(x)\\
			  &= 		\int ({H^u_{(\tx,x),(\ty,h^u_{\tx,\ty}(x))}})_{\ast}m_x\left(B_{h^u_{\tx,\ty}(x)}\right) d\mu^c_{\tx}(x)\\
			  &=		\int m_{h^u_{\tx,\ty}}\left(B_{h^u_{\tx,\ty}(x)}\right) d\mu^c_{\tx}(x)\\
			  &=		\int m_{y}\left(B_y\right) d\left({h^u_{\tx,\ty}}_{\ast}\mu^c_{\tx}\right)(y)\\
			  &=		\int m_{y}\left(B_y\right) d\left( \mu^c_{\ty} \right)(y)=\tm_{\ty}(B)\\		
\end{aligned}
\end{equation}
proving that the disintegration $\{\tm_{\tx}\}$ is $u$-invariant. Analogously we can find a total measure set such that
 $\tm_{\tx}$ is $s$-invariant. Using \cite[Proposition~4.8]{Extremal} we conclude that $m$ admits a
disintegration continuous in $\supp(\tmu)$ with respect to the partition $\hat{\mathcal{P}}$, 
which is $s$ and $u$ invariant. 
Now the continuity implies that \eqref{eq.invariance} is true for every $\tx\in\supp(\tmu)$. Thus
$$({H^u_{(\tx,x),(\ty,h^u_{\tx,\ty}(x))}})_{\ast}m_x=m_{h^u_{\tx,\ty}(x)}\mbox{ for }\mu^c_{\tx}\mbox{-almost every }x\in \cW^c_{\tx}.$$
for every $\tx \in \supp(\tmu)$ and $\ty \in \supp(\tmu)$ in the same unstable leaf (analogously for the stable holonomies) as claimed.
\end{proof}

\section{Disintegration for class $\nB$}\label{s.disint.tom}
This section is devoted to prove the existence of an $su/c$-invariant disintegration of $m$ for the time one maps of Anosov flows, we prove a more general version of this result.

We say that a measure $\mu$ has a \emph{good product structure} if it has projective product structure and also for every foliated box the disintegration in center manifolds $\{\cW^c_{C_x}(y)\}_{y\in C_x}$ is absolutely continuous with respect to the Lebesgue measure.

Every SRB measure for the flow has a good projective product structure, the absolute continuity in the center manifold is just because the measure is invariant by the flow and the projective product structure is because of the absolute continuity of the center stable foliation (see \cite[Proposition~3.4]{ViY13} for a proof  of this fact).

\begin{theorem}\label{t.sucinv}
Let $f:M\to M$ be a $C^2$ partially hyperbolic, dynamically coherent diffeomorphism and $\mu$ an invariant measure with a good product structure, then if 
$m$ is an $su$-invariant measure then there exists a disintegration that is $su/c$-invariant. 
\end{theorem}
\begin{proof}
For any topological space $N$, we denote by $\cM(N)$ the space of measures on $N$ with the weak$^*$ topology.

There exist $M^s$ and $M^u$ of total measure with $s$-invariance and $u$-invariance respectively. Take $M'=M^s\cap M^u$, $x\in \supp(\mu)$
and a foliated box $C_x\subset M$; via a local chart we can write $C=\Sigma\times D^c_R$ where $\Sigma$ is a transversal section to the center foliation and $D^c_R$ is a disc of radius $R$ 
in a center manifold, let $\pi:C_x\to \Sigma$ be the natural projection given by the center discs, observe that the center stable and center unstable manifolds gives a product structure of $\Sigma=\Sigma^s\times \Sigma^u$ and by hypothesis $\mu_{\Sigma}\sim\mu^s\times \mu^u$.

By the absolute continuity of the center foliation and the continuity of the Jacobians of the stable and unstable holonomies of $f$ we have that 
the disintegration $\Sigma\to \cM(D^c_R),\quad x\mapsto \mu^c_x$ is of the form 
$\mu^c_x=\rho \mu^c$ where $\mu^c$ is the Lebesgue measure in $D^c_R$ and $\rho$ is continuous.

We write $x\rels y$ for $x,y\in \Sigma$ in the same center stable manifold, and $x\relu y$ if they are in the same center unstable manifold.
Take $r>0$ smaller than $R$ such that for every $x\rels y$, $h^s_{x,y}:D^c_r\to D^c_R$ is well defined. 

Fix some $x^s\in \Sigma^s$ such that $\mu^u\left(\{x^s\}\times \Sigma^u\cap \pi(M')\right)=1$ and fix some $x^u\subset \Sigma^u$ such that $\mu^c_{(x^s,x^u)}((x^s,x^u)\times D^c_R\cap M')=1$, by the absolute continuity in the center direction this implies that Lebesgue almost every $t\in (x^s,x^u)\times D^c_R$ is in $M'$; for simplicity denote $x_0=(x^s,x^u)$.

Take $\epsilon>0$ such that 
$$
D^c_{\epsilon}\subset \bigcap_{z\relu y\rels x_0}h^u_{y,z} h^s_{x_0,y}(D^c_r).
$$

Lets call $m^c_x=\int_{D^c_r}m_t d\mu^c_x(t)$. Fix $x\rels y$, for $t\in D^c_r$ lets call $h(t)=h^s_{x,y}(t)$ and define $(\cH^s_{x,y}m^c_x)_{h(t)}={H^s_{(x,t),(y,h(t))}}_* m_t$. If $m_t$ is well defined for almost every $t\in D^c_r$
then by the absolute continuity of the holonomies $(\cH^s_{x,y}m^c_x)_{t'}$ is defined for almost every $t'\in h^s_{x,y}(D^c_r)$. Analogously we can define $(\cH^u_{x,y}m^c_x)_{t'}$

Now define a new disintegration of $m$ in the box $\Sigma\times D^c_\epsilon$ in the following way: 
Fix the restriction of the original disintegration defined for almost every $(x_0,t)\in \{x_0\}\times D^c_r$, (in particular $m^c_{x_0}$ is well defined) 
extend the disintegration to every $y\relu x_0$ and $t'\in  h^u_{x_0,y}(D^c_r)$ by $(\cH^u_{x_0,y}m^c_{x_0})_{t'}$. Lets call this disintegration $m^c_y$, and then extend it for every $z\rels y\relu x_0$ by
$(\cH^s_{y,z}m^c_y)_{t''}$ for almost every $t''\in h^s_{y,z} h^u_{x_0,y}(D^c_r)$. 
Now by definition of $\epsilon$ this disintegration is well defined in $\Sigma\times D^u_\epsilon$, and it coincides with the original disintegration almost everywhere, lets denote this
new disintegration by $t\mapsto m^s_t$. 

By construction this disintegration is $s/c$-invariant. We claim that this new disintegration, 
with respect to the partition $\{x\}\times D^c_{\epsilon}\times \proj$, defined by 
$$
\Sigma\to \cM(D^c_r\times \proj),\quad m^{cs}_x=\int_{D^c_r}m^s_t d\mu^c_x(t),$$
is continuous.
Assuming this claim, we can define analogously, reducing $\epsilon$ if necessary, a disintegration $x\mapsto m^u_x$ that is $u/c$-invariant on $\Sigma\times \proj$. 
We have that $m^{cs}_x=m^{cu}_x$ for Lebesgue almost every $x\in \Sigma$, then as both are continuous we have the equality for every $x\in \Sigma$.
Then by the uniqueness of the Rokhlin disintegration, for every $x\in \Sigma$, $m^s_t=m^u_t$ for $\mu^c$ almost every $t\in D^c_{\epsilon}$. As this boxes cover $\supp(\mu)$ we conclude the theorem.

We are left to prove the claim.

We will prove that $x\mapsto m^{cs}_x$ is uniformly continuous varying $y\rels x$ and $z\relu y$.
Fix some continuous $\varphi:D^c_\epsilon \times \proj\to \real$ and let $y_n\to x$, $y_n\rels x$. 
\begin{equation*}
 \int \varphi (t,v) d m^{cs}_{y_n}=\int_{D^c_\epsilon} \int_{\proj} \varphi (t,v)dm^s_{y_n,t}(v) d\mu^c_{y_n}(t)
 \end{equation*}
\begin{equation*}
\int \varphi (t,v) d m^{cs}_{y_n}  =\int_{D^c_\epsilon} \int_{\proj} \varphi (t,v)d {H_{n,t}}_*m^s_{x,h_n(t)}(v) d\mu^c_{y_n}(t),
\end{equation*}
where $h_n(t)=h^s_{y_n,x}(t)$, and $H_{n,t}=H^s_{(x,h_n(t)),(y_n,t)}$ so we can write the integral as
$$
\int_{D^c_\epsilon} \int_{\proj} \varphi (t,H_{n,t} v)d m^s_{x,h_n(t)}(v) \rho(y_n,t) d\mu^c(t),
$$
changing variables we have: 
$$
\int_{h_n(D^c_\epsilon)} \int_{\proj} \varphi (h_n^{-1}(t),H_{n,h_n^{-1}(t)} v)d m^s_{x,t}(v) J_{\mu^c}h_n^{-1}(t) \rho(y_n,h_n^{-1}(t)) d\mu^c(t),
$$
where $ J_{\mu^c}h_n^{-1}$ is the Jacobian of $h_n^{-1}$ with respect to the Lebesgue measure $\mu^c$. Hence using that $h_n^{-1}\to \id_{\cW^c(x)}$, $J_{\mu^c}h_n^{-1}\to 1$, 
$H_{n,t}\to \id_{\proj}$ uniformly and the continuity of $\rho$ and $\varphi$, we conclude that $\int \varphi (t,v) d m^{cs}_{y_n}\to \int \varphi (t,v) d m^{cs}_{y}$.

For $z_n\relu x$, such that $z_n\to x$, we take $z_n\rels y_n\relu x_0$ and $x\rels x'\relu x_0$ (see figure~\ref{f.proof}),
\begin{figure}[ht]
    \centering
    \includegraphics[scale=0.6]{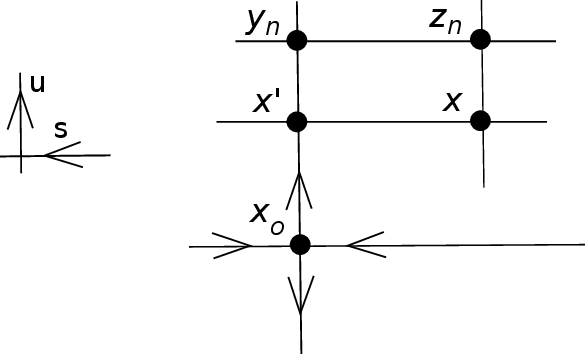}
    \caption{definition of $z_n$ and $x'$.}\label{f.proof}
\end{figure}
 by construction $y_n\to x'$, $y_n\relu x'\relu x_0$ and we have $u$-invariance on $W^u(x_0)$ so using the same arguments as before, changing stable by unstable 
$$
\int \varphi (t,v) d m^{cs}_{y_n}\to \int \varphi (t,v) d m^{cs}_{x'}.
$$
Now observe that
$$
\int \varphi (t,v) d m^{cs}_{z_n}  =\int_{D^c_\epsilon} \int_{\proj} \varphi (t,v)d {H^s_{n,t}}_*m^s_{y_n,h^s_n(t)}(v) d\mu^c_{z_n}(t),
$$
where $h^s_n=h^s_{y_n,z_n}$ and $H^s_{n,t}=H^s_{(y_n,h^s_n(t)),(z_n,t)}$ are such that $h^s_n\to h^s_{x',x}$ and $H^s_{n,t}\to H^s_{(x',h^s(t)),(x,t)}$.

Therefore, using the same calculations as before, it follows that
$\int \varphi (t,v) d m^{cs}_{z_n}\to \int \varphi (t,v) d m^{cs}_{x}$.

This proves the claim and concludes the proof.
\end{proof}

\section{Some technical results}\label{s.technical}

We can characterize the cocycles accumulated by cocycles with zero exponents, and also, in the $\SL$ case, the discontinuity points for the Lyapunov exponents using the next proposition:
\begin{proposition}\label{prop.discontinuity}
 If $A\in H^{\alpha}(M)$ is accumulated by cocycles with zero Lyapunov exponents, then there exists some $F_A$-invariant measure $m$, $su$-invariant, that projects
 to $\mu$. Also, if the cocycles takes values in $\SL$ and $A$ is a discontinuity point for the Lyapunov exponents, then every $F_A$-invariant measure $m$ that projects to $\mu$ is also $su$-invariant
\end{proposition}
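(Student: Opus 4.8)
The plan is to exploit the semicontinuity properties of Lyapunov exponents together with the convergence of invariant measures, and then invoke the Invariance Principle (in the $\Sp$ version, \cite{ASV13,Extremal}) to upgrade $F_A$-invariance of the limiting measure to $su$-invariance. First I would treat the first assertion. Suppose $A_k \to A$ in $H^\alpha(M)$ with each $A_k$ having zero Lyapunov exponents (so $\lambda^+(A_k,\mu,x) = 0$ for $\mu$-a.e.\ $x$, by ergodicity of $\mu$). For each $k$, by Oseledets the projective cocycle $F_{A_k}$ admits an $F_{A_k}$-invariant probability measure $m_k$ projecting to $\mu$; one natural choice is to push forward $\mu$ under a measurable section of the Oseledets flag, or simply to take any ergodic component of a weak-$\ast$ accumulation point of Cesàro averages of $(F_{A_k})^n_\ast(\mu\times\delta_{[v]})$. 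Since $\sup_k\norm{A_k}_\alpha<\infty$, the space $M\times\proj$ is compact and the sequence $(m_k)$ has a weak-$\ast$ accumulation point $m$; passing to a subsequence, $m_k\to m$. Because $A_k\to A$ uniformly, $F_{A_k}\to F_A$ uniformly, hence $m$ is $F_A$-invariant, and clearly $P_\ast m = \mu$.

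The crucial point is then that $m$ is $su$-invariant. Here I would use the fiber-bunched hypothesis: $A$ (and $A_k$ for $k$ large, by openness of fiber bunching) admits strong stable and strong unstable holonomies $H^{s,A}$, $H^{u,A}$, and these depend continuously on the cocycle in $H^\alpha$. The Invariance Principle of Avila--Santamaria--Viana says: if $m$ is an $F_A$-invariant measure projecting to $\mu$ and the \emph{fiber} Lyapunov exponents of $(f,A,m)$, computed along the extremal Oseledets subspaces, vanish (more precisely, if $\lambda^+ = \lambda^- = 0$ along the support in the appropriate sense), then $m$ is $su$-invariant. So I must verify that the fiber exponents of $m$ vanish. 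This follows from upper semicontinuity of the integrated Lyapunov exponent under weak-$\ast$ convergence of invariant measures together with $C^0$ convergence of the cocycle: $\int \lambda^+\,d(\text{projected dynamics on } m) \le \liminf_k (\text{same for } m_k) $ — but for $m_k$ this equals the corresponding quantity for $A_k$, which is $0$ since $A_k$ has zero exponents; and by symmetry (symplecticity forces $\lambda^+ = -\lambda^-$) the lower exponent is likewise $0$. Hence the extremal exponents of $m$ vanish and the Invariance Principle applies, giving $su$-invariance of $m$. This semicontinuity-plus-Invariance-Principle step is the heart of the argument and is where I expect the only real subtlety: one must phrase the Invariance Principle precisely for a possibly non-ergodic $m$ and make sure the hypothesis ``all fiber exponents vanish'' is what the semicontinuity delivers (using that for symplectic cocycles vanishing of the top exponent forces vanishing of all of them on an invariant set of full measure, or decomposing $m$ into ergodic components).

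For the second assertion, suppose the cocycles take values in $\SL$ and $A$ is a discontinuity point for the Lyapunov exponents, and let $m$ be \emph{any} $F_A$-invariant measure projecting to $\mu$. I would argue by contradiction: if $\lambda^+(A,\mu,\cdot)>0$ on a positive-measure set, then by ergodicity $\lambda^+(A,\mu,x) = L(A,\mu) > 0$ a.e., and in the $\SL$ setting $A$ is automatically a continuity point. Indeed, for $\SL$ cocycles the Lyapunov exponent is continuous at every cocycle with positive exponent: this is the standard lower-semicontinuity of $L$ at hyperbolic-in-measure cocycles combined with upper semicontinuity that always holds; one can also invoke that any $F_A$-invariant $m$ with $L>0$ must be supported on the Oseledets sections, which are unique, forcing the exponent to be robust. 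This contradicts the assumption that $A$ is a discontinuity point. Therefore $\lambda^+(A,\mu,\cdot) = 0$, so $L(A,\mu)=0$, and then the fiber exponents of every $F_A$-invariant $m$ projecting to $\mu$ vanish; applying the Invariance Principle exactly as above yields that every such $m$ is $su$-invariant. The only care needed here is the dimension count in $\SL$: with $d=1$ the projective space is a circle and the argument is classical (see \cite{Almost}), so I would simply cite the relevant statement of the Invariance Principle and of continuity at positive-exponent $\SL$ cocycles rather than reprove them.
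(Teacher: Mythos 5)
Your proposal has two genuine gaps, one in each half of the statement, both stemming from the same misconception.

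\textbf{First part.} You take the limit $m$ of $su$-states $m_k$ and then try to invoke the Invariance Principle directly for the cocycle $A$, arguing that ``the fiber exponents of $m$ vanish.'' But the Avila--Viana/ASV Invariance Principle used in this paper has the hypothesis that the extremal Lyapunov exponents \emph{of $A$ with respect to $\mu$} vanish, not that the averaged exponent seen by the particular measure $m$ vanishes. These are different conditions, and the latter is strictly weaker: if $L(A,\mu)>0$ and $m=\tfrac12 m^u + \tfrac12 m^s$ (with $m^u_x=\delta_{E^u_x}$, $m^s_x=\delta_{E^s_x}$), then $\int \log\frac{\|A(x)v\|}{\|v\|}\,dm = 0$, yet $m$ is neither $s$- nor $u$-invariant. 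And $L(A,\mu)>0$ is not excluded here --- $A$ being accumulated by zero-exponent cocycles does not force $L(A,\mu)=0$; in fact, if $L(A,\mu)=0$ then $A$ is automatically a continuity point by upper semicontinuity, so the interesting case is precisely $L(A,\mu)>0$. The paper's actual mechanism is quite different: it applies the Invariance Principle to each $A_k$ (where the hypothesis $L(A_k,\mu)=0$ is available), concluding that \emph{every} $F_{A_k}$-invariant lift $m_k$ is an $su$-state, and then proves a separate approximation lemma (Lemma~3.3) showing that $su$-invariance is a closed condition along sequences $A_k\to A$, $m_k\to m$, using that the holonomies $H^{u,k}\to H^u$ as $A_k\to A$. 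You need something like that lemma; the Invariance Principle applied to the limit $A$ cannot replace it.

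\textbf{Second part.} Your contradiction argument has the inequality backwards. You claim ``for $\SL$ cocycles the Lyapunov exponent is continuous at every cocycle with positive exponent,'' and conclude that a discontinuity point must have $L(A,\mu)=0$. This is false in the present setting and in fact is the opposite of the correct statement: by upper semicontinuity $\limsup_k L(A_k,\mu)\le L(A,\mu)$, so if $L(A,\mu)=0$ then $L(A_k,\mu)\to 0$ for every approximating sequence, i.e.\ \emph{zero} exponent forces continuity. Hence a discontinuity point has $L(A,\mu)>0$. Continuity at positive exponent does hold over hyperbolic (Anosov) bases for fiber-bunched $\SL$ cocycles, but not over the partially hyperbolic bases considered here --- the paper explicitly constructs a counterexample (the Wang example over a skew-product with an irrational rotation). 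The paper's argument in the second part is therefore necessarily more structural: since $L(A,\mu)>0$, \cite[Proposition~3.1]{BP15} gives that every $F_A$-invariant $m$ over $\mu$ has the form $m=a\,m^u+b\,m^s$ with $m^u_x=\delta_{E^u_x}$, $m^s_x=\delta_{E^s_x}$; one already knows $m^u$ is $u$-invariant and $m^s$ is $s$-invariant, and the discontinuity produces a sequence of $u$-states $m_k^u$ converging (after passing to a subsequence) to some $a\,m^u+b\,m^s$ with $b\neq 0$, which combined with Lemma~3.3 shows $m^s$ is also $u$-invariant, hence every convex combination is $u$-invariant; the $s$-side is symmetric using $\lambda^+=-\lambda^-$. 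Your proposal bypasses all of this via the incorrect continuity claim, so it would not prove the second assertion.
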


\begin{proof}
Take $(A_k)_{k\in \natural}$ converging to $A$ such that $L(A_k,\mu)=0$.
Take $m_k$ to be $F_{A_k}$-invariant measures projecting to $\mu$, then by the invariance principle \cite{Extremal}, this measures are $su$-invariant. Take a subsequence such that $m_k$ converges to some $m$, by theorem~\ref{teo} this measure is $su$-invariant.

For the second part, lets suppose that an $\SL$ cocycle $A$ is a discontinuity point for the Lyapunov exponents.
We have that $L(A,\mu)>0$, otherwise $A$ is a continuity point, then by \cite[Proposition~3.1]{BP15} the conditional measures of  
 every $F$-invariant measure $m$ are of the form $m_x=a \delta_{E^{u}_x}+ b \delta_{E^{s}_x}$, 
  with $a+b=1$, so we can write $m=a m^u+ b m^s$ where $m^u$ and $m^s$ are measures projecting in $\mu$ with disintegration
  $m^u_x=\delta_{E^{u}_x}$ and $m^s_x=\delta_{E^{s}_x}$. The invariance of $E^u_x$ by unstable holonomies gives
  that $m^u$ is $u$-invariant, analogously $m^s$ is $s$-invariant.
  
  If $A$ is a discontinuity point then there exist $(A_k)_{k\in \natural}$ converging to $A$ such that $L(A_k,\mu)$ does 
  not converge to $L(A,\mu)$. Taking 
  $$\Psi_{A_k}:M\times \real P^1\to\real\quand \Psi_{A_k}(x,v)=\frac{\norm{A_k(x)v}}{\norm{v}}
  $$
  and $m^u_k$ the $u$-invariant measure projecting in $\mu$ as before (but for $A_k$) if $L(A_k,\mu)>0$ or 
  any $u$-invariant measure if $L(A_k,\mu)=0$, we have that 
  $$\int\Psi_{A_k}dm^u_k =L(A_k,\mu)$$ 
  does not converge to $$ L(A,\mu)=\int\Psi_{A}dm^u,$$
  taking a subsequence we can suppose that $m^u_k$ converges to some $m=a m^u+b m^s$ with $b\neq 0$, then
  we have that $m$ is $u$-invariant, so 
  $m^s=b^{-1}(m-a m^u)$ is also $u$-invariant, hence every $a' m^u+b' m^s$ is $u$-invariant.
  
  As $\lambda^+(A)=-\lambda^-(A)$ the proof of $s$-invariance follows analogously.
  \end{proof}

\subsection{Symplectic transvections}\label{ss:transvections} 
The results of this section can be found in the thesis of Cambrainha \cite{cambrainha}, for completeness we rewrite the statements and the proofs.

A linear map $\tau:\real^{2d}\to\real^{2d}$ is called a \emph{transvection} if there are a hyperplane $H\subset\real^{2d}$ and a vector $v\in H$ such that the restriction $\tau|_H$ is the identity on $H$ and for any vector $u\in\real^{2d}$, $\tau(u)-u$ is a multiple of $v$, say $\tau(u) - u=\lambda(u)v$ where $\lambda$ is a linear functional of $\real^m$ such that $H\subset\ker \lambda$.

For later use, let us recall the following characterization of symplectic transvections of $(\mathbb{R}^{2d}, \omega)$. If $\tau\in\Sp$ is a transvection of the form $\tau(u)=u+\lambda(u)v$, then
\begin{eqnarray*}
	 \omega(u,u')&=&\omega(\tau(u),\tau(u'))\\
	 &=&\omega(u+\lambda(u).v,u'+\lambda(u').v)\\
	 &=&\omega(u,u')-\lambda(u)\omega(u',v)+\lambda(u')\omega(u,v).\\
\end{eqnarray*}

Hence, $\lambda(u)\omega(u',v)=\lambda(u')\omega(u,v)$ for every $u,u'\in \real^{2d}$. Taking $u'$ such that $\omega(u',v)=1$ and denoting $a=\lambda(u')$, we get the general formula for a symplectic transvection:
$$\tau(v)=\tau_{v,a}(u)=u+a\cdot\omega(u,v)\cdot v$$

Of course, every transformation of this form is a symplectic transvection (fixing the hyperplane $(\mathbb{R}v)^\perp $).

\begin{lemma}\label{lemma:symp_transv} Let $(E,\omega)$ be a $2d$-dimensional symplectic vector space and let $V$ and $W$ be subspaces of $E$ with complementary dimensions (i.e., $\textrm{dim}(V)+\textrm{dim}(W)=2d$). Suppose that $V\cap W$ has dimension $k>0$. Then, there exist $k$ symplectic transvections $\sigma_1, \dots,\sigma_k$ arbitrarily close to the identity such that
$$(\sigma_1 \dots \sigma_k)(V)\cap W =\{0\}$$
\end{lemma}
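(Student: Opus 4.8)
The plan is to prove the statement by induction on $k=\dim(V\cap W)$, reading the conclusion $(\sigma_1\cdots\sigma_k)(V)\cap W=\emptyset$ as $(\sigma_1\cdots\sigma_k)(V)\cap W=\{0\}$ (which, since $\dim V+\dim W=2d$, is the same as $E=(\sigma_1\cdots\sigma_k)(V)\oplus W$). The inductive step will be carried by a single symplectic transvection, arbitrarily close to the identity, that lowers the dimension of the intersection by exactly one.

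So assume $k\ge 1$, fix $\epsilon>0$, and choose a nonzero $u_0\in V\cap W$. First I would produce a suitable direction $v$: since $V+W$ has dimension $2d-k<2d$, and the symplectic orthogonal $(\real u_0)^\perp=\{v\in E:\omega(u_0,v)=0\}$ is a hyperplane (because $\omega$ is non-degenerate and $u_0\neq 0$), both are proper subspaces of $E$, and a real vector space is never a union of two proper subspaces; hence one may pick $v\in E$ with $v\notin V+W$ and $\omega(u_0,v)\neq 0$. For small $a\neq 0$ set $\sigma:=\tau_{v,a}$, that is $\sigma(u)=u+a\,\omega(u,v)\,v$; this lies within $\epsilon$ of the identity once $|a|$ is small enough.

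Next I would compute $\sigma(V)\cap W$ directly. As $\sigma$ restricts to an isomorphism of $V$ onto $\sigma(V)$, we have $\sigma(V)\cap W=\sigma(U)$ with $U:=\{u\in V:u+a\,\omega(u,v)\,v\in W\}$, so $\dim(\sigma(V)\cap W)=\dim U$. If $u\in U$, then $u\in W+\real v$, so $u\in V\cap(W+\real v)$; but $v\notin V+W$ forces $V\cap(W+\real v)=V\cap W$, whence $u\in W$, and then $a\,\omega(u,v)\,v\in W$ with $v\notin W$ and $a\neq 0$ gives $\omega(u,v)=0$. Conversely any $u\in(V\cap W)\cap(\real v)^\perp$ is fixed by $\sigma$ and lies in $W$, so $U=(V\cap W)\cap(\real v)^\perp$. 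This is the kernel of the linear functional $u\mapsto\omega(u,v)$ on the $k$-dimensional space $V\cap W$, which is non-zero because $\omega(u_0,v)\neq 0$; therefore $\dim(\sigma(V)\cap W)=k-1$.

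To finish, I would iterate: put $\sigma_k:=\sigma$. If $k-1=0$ we are done with a single transvection; otherwise $\sigma_k(V)$ and $W$ still have complementary dimensions and $(k-1)$-dimensional intersection, so the inductive hypothesis yields transvections $\sigma_1,\dots,\sigma_{k-1}$, each within $\epsilon$ of the identity, with $(\sigma_1\cdots\sigma_{k-1})(\sigma_k(V))\cap W=\{0\}$, i.e. exactly $(\sigma_1\cdots\sigma_k)(V)\cap W=\{0\}$. The computation is essentially direct; the only place that needs genuine care — and where I expect the (mild) main difficulty to sit — is the dimension bookkeeping, i.e. checking that the intersection drops by precisely one, together with the uniform choice of an admissible $v$, which is what makes a case analysis (according to whether or not $V+W\subseteq(\real u_0)^\perp$) unnecessary.
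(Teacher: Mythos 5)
Your proof is correct and follows essentially the same approach as the paper: induction on $k$, with each inductive step carried by a single symplectic transvection in a direction chosen outside $V+W$ and not symplectically orthogonal to $V\cap W$, which drops $\dim(\sigma(V)\cap W)$ by exactly one. Your computation is coordinate-free (identifying $\sigma(V)\cap W$ with $(V\cap W)\cap(\real v)^\perp$ directly) where the paper manipulates explicit bases, but the underlying idea and the choice of transvection are the same.
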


\begin{proof} We proceed by induction on $k=\dim(V\cap W)$. Let $m=\dim(V)$, so that $k\leq \min\{m,2d-m\}$.

If $k=1$, then we can choose basis of $V$ and $W$ such that
$$V=\textrm{span}\{v_1,v_2,...,v_m\} \quad \textrm{and} \quad
W=\textrm{span}\{v_1,w_2,...,w_{2d-m}\}
$$
Note that $V_0=V+W$ and $V_1=(\mathbb{R} v_1)^\perp$ are hyperplanes of $E$.
Take $u_0 \notin V_0 \cup V_1$ and $\varepsilon>0$. Denote by $\sigma:E\to E$ the symplectic transvection associated to $u_0$ and $\varepsilon$:
$$
\sigma(u)= u+ \varepsilon\, \omega(u,u_0)\, u_0 \quad \text{ for every } u\in E.
$$
By definition, the symplectic transvection $\sigma$ becomes arbitrarily close to the identity as close to the identity by taking $\varepsilon>0$. Moreover, we claim that $\sigma(V)\cap W=\{0\}$. Indeed, take any $v\in \sigma(V)\cap W$, say
\begin{equation}\label{intersection}
\sum\limits_{i = 1}^m \alpha _i \, \sigma (v_i ) = v =  \beta _1 v_1  +
\sum\limits_{i = 2}^{2d - m} \beta _i \, w_i
\end{equation}
for some numbers $\alpha_1, \dots, \alpha_m$ and $\beta_1, \dots, \beta_{2d-m}$.
If we write $\sigma(v_i)=v_i+K_i u_0$ with $K_i=\varepsilon\, \omega(v_i, u_0)$, this equality implies that
\begin{equation}\label{lin ind}
(\alpha _1  - \beta _1 )v_1  + \sum\limits_{i = 2}^m {\alpha _i v_i }  - \sum\limits_{i = 2}^{2d - m} {\beta _i w_i }  + \left( {\sum\limits_{i = 1}^m {\alpha _i K_i } } \right)u_0  = 0.
\end{equation}
Since $u_0\notin V_0$, the set $\{v_1,...,v_m,w_2,...,w_{2d-m},u_0\}$ is a basis of $E$. Thus, all coefficients in \eqref{lin ind} are zero:
$$
\left\{\begin{array}{l}
\alpha _1  = \beta _1  \\
\alpha _i  = 0,\,\,\,\,\,i = 2,...,m \\
\beta _i  = 0,\,\,\,\,\,i = 2,...,2d - m \\
\sum\limits_{i = 1}^m \alpha _i K_i  = 0\\
\end{array}\right.
$$
From the second and the fourth equations above, we deduce that $\alpha _1 K_1  = 0$, i.e., $\varepsilon\, \alpha_1\,\omega(v_1, u_0)=0$. Since $u_0\notin V_1$ means that $\omega(v_1,u_0)\neq 0$, it follows that  $\alpha_1=0$ and, \emph{a fortiori}, $v=0$. This proves our claim, so that the first step of the induction argument is complete.

Let us now perform the general step of the induction. Suppose now that the lemma is true for $k-1$, and let $V\cap W=\textrm{span}\{v_1, \dots, v_k\}$. In this case, we can find basis for $V$ and $W$ such that
$$
\begin{aligned}
V&=\textrm{span}\{v_1, v_2, \dots, v_k, v_{k+1}, \dots, v_m\} \quad \textrm{and} \\
 W&=\textrm{span}\{v_1, \dots, v_k, w_{k+1}, \dots, w_{2d-m}\}.
\end{aligned}
$$
Observe that $V_0=V+W$ and $V_1=(\textrm{span}\{v_1,...,v_k\})^\perp$ are co-dimension $k$ subspaces of $E$. Fix $u_0\notin V_0\cup V_1$, $\varepsilon>0$ and consider the symplectic transvection
$$
\sigma_k(u):= u+ \varepsilon\,\omega(u, u_0)\,u_0, .
$$
Any element $v\in \sigma_k(V)\cap W$ can be written in a similar way to \eqref{intersection}. From this, we have that:
\begin{equation}
\sum\limits_{i = 1}^{k} {(\alpha _i  - \beta _i )v_i}  + \sum\limits_{i = k+1}^m {\alpha _i v_i }  - \sum\limits_{i = k+1}^{2d - m} {\beta _i w_i }  + \left( {\sum\limits_{i = 1}^m {\alpha _i K_i } } \right)u_0  = 0
\end{equation}
Since our choice $u_0\notin V_0$ implies that the vectors $\{v_1,...,v_m,w_{k+1},...,w_{2d-m},u_0\}$ are linearly independent, one has:
$$
\left\{\begin{array}{l}
\alpha _i  = \beta _i,\,\,\,\,\,i = 1, \dots, k \\
\alpha _i  = 0,\,\,\,\,\,i = k+1, \dots, m \\
\beta _i  = 0,\,\,\,\,\,i = k+1, \dots, 2d - m \\
\sum\limits_{i = 1}^m \alpha _i K_i  = 0\\
\end{array}\right.
$$
The second and fourth equations imply that $\sum\limits_{i = 1}^k \alpha _i K_i  = 0$, i.e.,
$\omega\left(\sum\limits_{i = 1}^k \alpha _i v_i,u_0\right)  = 0$. Therefore:
$$
(\sigma_k A)(V)\cap W=\left\{v=\sum\limits_{i=1}^k{\alpha_i\,v_i} \left|\ \ \omega\left(\sum\limits_{i = 1}^k \alpha _i v_i,u_0\right)  = 0\right.\right\}
$$
Because $u_0\notin V_1$, we deduce that $(\sigma_k A)(V)\cap W$ is a $(k-1)$-dimensional subspace. By  induction hypothesis, there are $\sigma_1, \dots,\sigma_{k-1}$ symplectic transvections arbitrarily close to the identity such that
$$
(\sigma_1\dots\sigma_{k-1}\sigma_k A)(V)\cap W = \{0\}
$$
This completes the proof of the lemma.
\end{proof}

The conclusion of the previous lemma is an open condition: $B(V)\cap W=\{0\}$ for every $B$ sufficiently close to the symplectic automorphism $\sigma$ provided by this lemma. By recursively using this fact and the previous lemma, we deduce that:
\begin{corollary}\label{cor:transv_finite}
Let $(E,\omega)$ be a $2d$-dimensional symplectic vector space and let $\{(V_j, W_j): 1\leq j\leq m\}$ be a finite collection of pairs subspaces of $E$ with complementary dimensions (i.e., $\textrm{dim}(V_j) + \textrm{dim}(W_j) = 2d$ for all $1\leq j\leq m$). Then, there exists a symplectic automorphism $\sigma$ arbitrarily close to the identity such that
$$\sigma(V_j)\cap W_j =\{0\} \ \ \forall\, j=1, \dots, m$$
\end{corollary}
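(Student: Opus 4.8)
The plan is to argue by induction on the number $m$ of pairs, the case of a single pair being essentially Lemma~\ref{lemma:symp_transv}. The crucial auxiliary observation, already noted in the paragraph preceding the corollary, is that for fixed subspaces $V,W$ of complementary dimension the set of linear maps $B$ with $B(V)\cap W=\{0\}$ is open: indeed $B(V)\cap W=\{0\}$ is equivalent to $B(V)\oplus W=E$, which is detected by the non-vanishing of a determinant (choose bases of $V$ and $W$, apply $B$ to the first, and require that the $2d$ resulting vectors be linearly independent), a condition stable under small perturbations of $B$.

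For $m=1$: if $V_1\cap W_1=\{0\}$ take $\sigma=\id$; otherwise $k=\dim(V_1\cap W_1)>0$ and Lemma~\ref{lemma:symp_transv} produces symplectic transvections $\sigma_1,\dots,\sigma_k$, each arbitrarily close to the identity, with $(\sigma_1\cdots\sigma_k)(V_1)\cap W_1=\{0\}$; set $\sigma=\sigma_1\cdots\sigma_k\in\Sp$, which is as close to $\id$ as desired.

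For the inductive step, assume the statement for $m-1$ pairs and pick $\sigma'\in\Sp$, close to $\id$, with $\sigma'(V_j)\cap W_j=\{0\}$ for $1\le j\le m-1$. By the openness remark there is a neighbourhood $\mathcal{U}$ of $\sigma'$ such that every $B\in\mathcal{U}$ still satisfies $B(V_j)\cap W_j=\{0\}$ for all $j\le m-1$. Now apply Lemma~\ref{lemma:symp_transv} to the pair $(\sigma'(V_m),W_m)$: since $\sigma'$ is invertible, $\dim(\sigma'(V_m))+\dim(W_m)=\dim(V_m)+\dim(W_m)=2d$, so the lemma yields a product $\tau$ of symplectic transvections, arbitrarily close to $\id$, with $\tau(\sigma'(V_m))\cap W_m=\{0\}$. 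Choosing $\tau$ close enough to $\id$ that $\tau\sigma'\in\mathcal{U}$, put $\sigma=\tau\sigma'$: it is a product of symplectic transvections, hence lies in $\Sp$; it satisfies $\sigma(V_j)\cap W_j=\{0\}$ for $j\le m-1$ because $\sigma\in\mathcal{U}$, and $\sigma(V_m)\cap W_m=\{0\}$ by construction; and it is close to $\id$ because $\sigma'$ and $\tau$ both are. To get $\sigma$ inside a prescribed neighbourhood of $\id$ one runs the induction with the auxiliary neighbourhood $\mathcal{N}_1$ chosen (by continuity of multiplication) so that $\mathcal{N}_1\mathcal{N}_2$ sits inside the prescribed one for a suitable $\mathcal{N}_2\ni\id$, and then takes $\tau$ in $\mathcal{N}_2$. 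This closes the induction.

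The only delicate point is the bookkeeping in the inductive step: the transvections used to fix the $m$-th pair must not spoil the transversality already arranged for the first $m-1$ pairs. This is exactly what the openness of the conditions $B(V_j)\cap W_j=\{0\}$ buys us --- we shrink the last perturbation until $\tau\sigma'$ lies in the common neighbourhood $\mathcal{U}$ --- so there is no genuine obstacle beyond keeping the quantifiers in order.
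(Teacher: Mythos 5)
Your proof is correct and is exactly the argument the paper sketches (the paper compresses it into the single sentence noting openness of the condition $B(V_j)\cap W_j=\{0\}$ and invoking the lemma recursively); you have merely written out the induction and the quantifier bookkeeping in full.
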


\section{Proof of Theorem \ref{teoprincipal}}\label{s.proof.teop}

Observe that for class $\nA$, $\pi^{-1}$ of the $\tf$-periodic points are $f$-periodic center leaves, so the hyperbolicity of $\tf$ implies the existence of periodic compact center leaves.
 For class $\nB$ it is well know that every non-trivial basic set has a dense set of periodic orbits for the flow, then this are fixed compact center leaves (actually circles) for the time one map.
 
To simplify notation let us assume that there exists a $\tf$-fixed point $\tp$, i.e: $\tf(\tp)=\tp$ (since all the arguments 
and results are not affected by taking an iterate).

Fix this compact center leaf $K=\cW^c(p)$, so $f:K\to K$ has an invariant measure $\mu^c_K$.

Now we are going to define $h:K\to K$ that is a composition of stable and unstable holonomies.

\subsection{Class $\nA$}
First we define this map for $f$ of class $\nA$.
\begin{figure}[ht]
    \centering
    \includegraphics[scale=0.6]{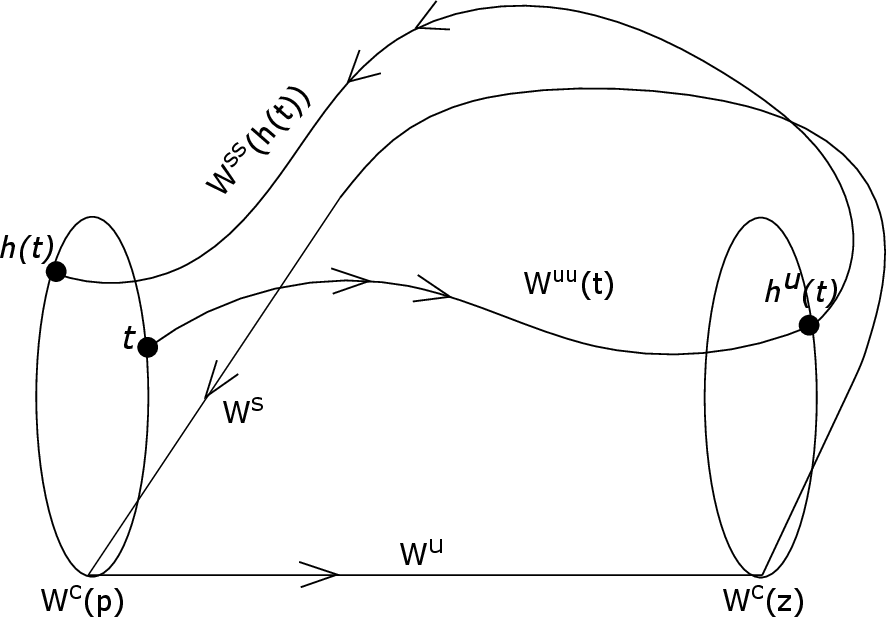}
    \caption{definition $h:\cW^c(p)\to \cW^c(p)$ for class $\nA$.}\label{figure.ccl}
\end{figure}

Fix the periodic point $\tp\in \tM$ and $\tz\in \tM$ a homoclinic 
point for $\tp$, i.e: $\tz\in W^s(\tp)\cap W^u(\tp)$, let us call $K=\cW^c(\tp)$. 
Define $h:K\to K$ (see figure~\ref{figure.ccl}) by 
$$h=h^s_{\tz,\tp}\circ h^u_{\tp,\tz},$$
and let $H^A_t:\real^{2d}\to \real^{2d}$
$$H^A_t=H^s_{\left(\tz,h^u_{\tp,\tz}(t)\right),\left(\tp,h(t)\right)} H^u_{\left(\tp,t\right),\left(\tz,h^u_{\tp,\tz}(t)\right)}.$$

Then if $m$ is an $su$-state, by proposition~\ref{p.sudisintA}, we have that ${H^A_t}_{\ast}m_t=m_{h(t)}$ for $\mu^c_{\tp}$-almost every $t\in K$.

\subsection{Class $\nB$}

Now we define $h$ and $H$ for $f$ of class $\nB$.

Observe that the restriction of $f$ to one of this compact leaves, $\cW^c(p)$, is a rotation in $\real/T\integer$ where $T$ is the period $p$ by the flow. So after a $C^1$ change of coordinates
we can suppose that $f_p=f\mid_{\cW^c(p)}=r_{\frac{1}{T}}$ where 
$$r_{\frac{1}{T}}:\real/\integer\to \real/\integer,\quad t\mapsto t+\frac{1}{T}.$$
The restriction of the cocycle to $\cW^c(p)$ is a cocycle over a rotation with Lebesgue $\mu^c_p$ as invariant measure.

Fix a flow periodic point $p\in \supp(\mu)$, lets take $z\in \cW^{cs}(p)\cap \cW^{cu}(p)$, by invariance of the center stable and center unstable manifolds $\phi_t(z)\in \cW^{cs}(p)\cap \cW^{cu}(p)$ for every $t\in \real$. Observe also that the orbit of $z$ is non recurrent.

\begin{figure}[ht]
    \centering
    \includegraphics[scale=0.6]{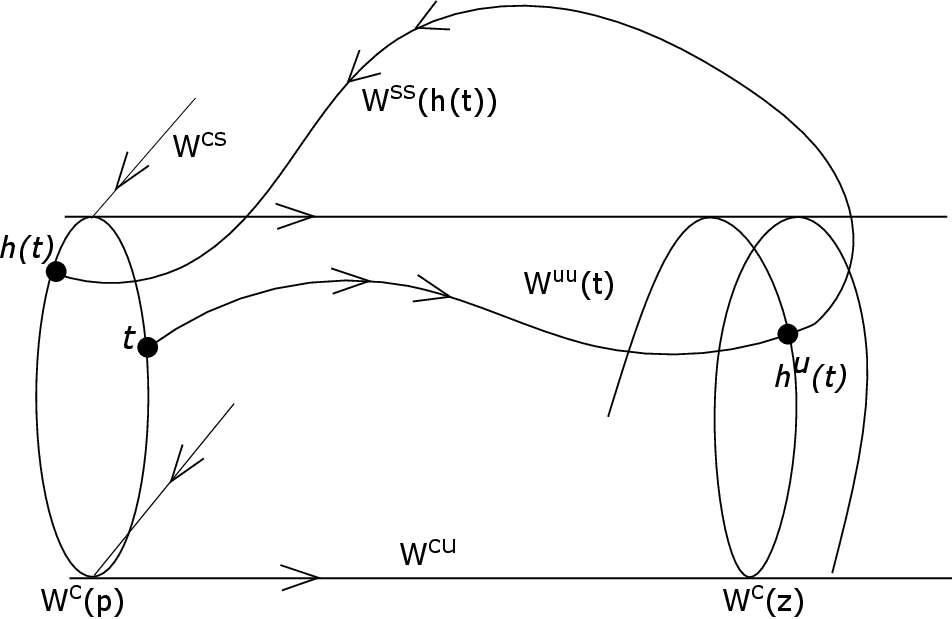}
    \caption{definition $h:\cW^c(p)\to \cW^c(p)$ for class $\nB$.}\label{figure.tom}
\end{figure}

Suppose that actually $z\in W^u(p)$ and define 
$h(p)=\cW^c(p)\cap W^s(z)$, now for $t\in \left[ 1,T \right]$ define 
$h(\phi_t(p)):=\phi_t(h(p))$ and observe that by the invariance of the stable and unstable manifolds $\phi_t(h(p))\in \cW^c(\phi_t(p))\cap W^s(\phi_t(z))$ and $\phi_t(z)\in W^u(\phi_t(p))$. 

So $h$ is a composition of stable and unstable holonomies. Also in the circle coordinates, identifying $p$ with $0$,
$$
h:\real/\integer\to \real/ \integer,\quad t\mapsto t+\omega,
$$ 
where $\omega$ is such that $\phi_\omega (p)=h(p)$. In particular $h$ also preserves the Lebesgue measure $\mu^c_p$ in the circle coordinates.

Call $h^u_{p,z}:\cW^c(p)-\{p\}\to \cW^c(z)$ the map given by $t\mapsto \phi_t(z)$, by the same reasoning as before this map is given by an unstable holonomy, if there is no risk of ambiguity we just write $h^u=h^u_{p,z}$. This map is not well defined in $0$ because $\phi_T(z)\neq z$.

Observe that as the center stable and center unstable are dense in $\supp(\mu)$ we can find points $z_1,\dots,z_d$ as before such that any pair $z_i$, $z_j$, with $i\neq j$, are in different orbits of the flow. Hence, we can define maps $h_1,\dots,h_d$ with the properties above.

\subsection{Weakly pinching}

Now we return to the proof of Theorem~\ref{teoprincipal}.
Fix once for all some periodic compact center leaf $K=\cW^c(p)$, take $A\mid_{K}:K\to \Sp$, this defines a linear cocycle over $f_{\tp}$ with invariant measure
 $\mu_K$, observe that $\mu_K$ is also $h$-invariant. Let $\lambda^1_{\mu_K}(t)\geq\cdots\geq \lambda^{k}_{\mu_K}(t)$ be the Lyapunov exponents of this cocycle.

 We say that $A\in H^{\alpha}(M)$ is \emph{Weakly pinching} if $L(A\mid_K,\mu_K)>0$.

\begin{lemma}\label{close}
 For every $A$ weakly pinching there exists $A'\in H^{\alpha}(M)$, arbitrary close to $A$ with that $A\mid_{K}=A'\mid_K$, such that $A'$ does not admit any $su$-invariant measure. 
\end{lemma}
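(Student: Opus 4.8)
The plan is to exploit the transvection machinery of Section~\ref{s:transvections} to destroy, by an arbitrarily small perturbation supported away from $K$, the compatibility conditions that an $su$-invariant measure would have to satisfy. Recall that by Proposition~\ref{prop.invarianteSL}, any $su$-invariant measure $m$ gives a continuous disintegration $\{m_x\}$ on $\supp(\tmu)$ with $({H^A_t})_\ast m_t = m_{h(t)}$ for $\mu_K$-a.e.\ $t\in K$, where $h:K\to K$ is the homoclinic return map through $\tz$ and $H^A_t$ is the composition of the corresponding fiber holonomies. Since $L(A|_K,\mu_K)>0$, the restricted cocycle has a nontrivial Oseledets decomposition $\real^{2d}=\bigoplus_i E^i_t$ along $K$; in particular, over a fixed periodic point $t_0\in K$ of $f_{\tp}$ (which exists after passing to a suitable iterate, or may be taken inside the support), the holonomy loop $H^A_{t_0}$ together with the period-map of $A|_K$ generates enough dynamics that the conditional measure $m_{t_0}$ on $\proj$ must be supported on a union of proper linear subspaces — this is the standard consequence of the invariance principle / Ledrappier-type argument combined with positivity of the exponent, exactly as used in Proposition~\ref{prop.discontinuity} and in \cite{BP15, Extremal}. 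Thus $m_{t_0}$ ``sees'' only a finite collection of proper subspaces of $\real^{2d}$.

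Next I would use continuity of the disintegration on $\supp(\tmu)$ and density (recall $\supp(\tmu)=\tM$) to transfer this rigidity to nearby leaves, and then invoke Corollary~\ref{cor:transv_finite}: there is a symplectic automorphism $\sigma$, arbitrarily close to the identity, which moves the relevant finite family of ``source'' subspaces off the corresponding ``target'' subspaces, i.e.\ $\sigma(V_j)\cap W_j=\{0\}$ for all $j$. The idea is to realize $\sigma$ as an extra factor inserted into the cocycle along one step of the unstable-stable excursion defining $h$ — concretely, pick a small neighbourhood $U$ of a point $q$ on the orbit segment realizing the homoclinic connection, with $U$ disjoint from $K$ and from the finitely many iterates of $K$ that matter, choose a bump function $\phi$ supported in $U$, and set $\hA(x) = \sigma_{\phi(x)}\,A(x)$ where $\sigma_s$ is a path of symplectic maps from $\id$ to $\sigma$. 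Because $U\cap K=\emptyset$ we get $\hA|_K = A|_K$, and $\|\hA-A\|_\alpha$ is as small as we like by shrinking $\sigma$ toward $\id$ (and the Hölder constant of $\phi$ is a fixed cost absorbed into the smallness of $\sigma$). The modified holonomies $H^{\hA}_t$ then differ from $H^A_t$ precisely by the insertion of $\sigma$ (conjugated by the cocycle iterates between $U$ and $K$), so the putative invariance relation $({H^{\hA}_t})_\ast m_t = m_{h(t)}$ forces $m_{t_0}$ to be supported on subspaces that $\sigma$ has just been chosen to separate from the required target subspaces — a contradiction. Hence $\hA$ admits no $su$-invariant measure.

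The main obstacle I anticipate is bookkeeping the holonomies correctly: the map $H^A_t$ is a composition of a strong-unstable holonomy followed by a strong-stable holonomy, each of which is itself an infinite product of cocycle blocks (via property (1) in the definition of strong holonomies), so inserting $\sigma$ at a single point $q$ changes $H^{\hA}_t$ by conjugation by a definite finite product $A^N(\cdot)$ of matrices rather than by $\sigma$ itself. One must check that this conjugated transvection still separates the relevant subspaces — which is fine, since applying Corollary~\ref{cor:transv_finite} to the \emph{conjugated} family $\{A^N V_j,\, A^N W_j\}$ (again finitely many pairs of complementary dimension, using that the $E^i_t$ are isotropic/coisotropic with $\dim E^i + \dim(\bigoplus_{j\neq i}E^j)=2d$ in the symplectic setting) produces the needed $\sigma$ directly. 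A secondary technical point is that positivity of $L(A|_K,\mu_K)$ is only an integral/a.e.\ statement, so one should either pass to a periodic point in $\supp(\mu_K)$ where the exponent is positive (Livšic/periodic-approximation), or argue more softly that $su$-invariance of $m$ together with $L(A|_K,\mu_K)>0$ already contradicts the Invariance Principle \emph{before} perturbing — but the cleanest route is the transvection argument above, localizing everything at one periodic leaf and one extra excursion point.
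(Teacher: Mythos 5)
Your overall strategy is the same as the paper's (perturb $A$ by a symplectic transvection supported near the homoclinic leaf $\cW^c(\tz)$, so that $\hA|_K=A|_K$, and invoke Corollary~\ref{cor:transv_finite} to force the putative invariance relation along the holonomy loop $H^A_t$ into a contradiction), and the bookkeeping point you flag about the transvection being conjugated into $H^{\hA}_t$ by a definite cocycle block is handled correctly. However there are two real gaps.

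First, you base the argument at ``a fixed periodic point $t_0\in K$ of $f_{\tp}$'' and assert such a point exists (after passing to an iterate, or inside the support). In general $f_{\tp}$ has no periodic points in $\supp\mu^c_{\tp}$: the most relevant example, appearing already in Theorem~\ref{teo.example.cont}, is an irrational rotation on a center circle. More to the point, periodicity of $f_{\tp}$ is not even the right recurrence to ask for: the invariance relation $({H^A_t})_\ast m_t = m_{h(t)}$ involves the holonomy loop $h=h^s_{\tz,\tp}\circ h^u_{\tp,\tz}$, not $f_{\tp}$. The paper's argument instead picks a positive-$\mu_K$-measure set $K'\subset K$ on which the Oseledets decomposition $\real^{2d}=V^u_t\oplus V^s_t$ exists, has constant dimensions, and varies continuously, and then uses that $\mu_K$ is $h$-invariant together with Poincar\'e recurrence: for the smallest $j$ with $\mu_K(h^j(K')\cap K')>0$, one works near a density point $t'$ of this intersection. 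That is the correct substitute for your periodic point.

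Second, you conclude with ``a contradiction'' after a single transvection, but a single application of Corollary~\ref{cor:transv_finite} does not kill $\supp(m_{h^j(t')})$. After choosing $\sigma$ so that $H^{\hA,j}_{t'}V^u_{t'}\cap V^s_{h^j(t')}=\{0\}$ and $H^{\hA,j}_{t'}V^s_{t'}\cap V^u_{h^j(t')}=\{0\}$, the support is only pushed into $V':=V^s_{h^j(t')}\cap H^{\hA,j}_{t'}V^s_{t'}$. When $\dim V^s>d$ (which can certainly happen), these two subspaces are not of complementary dimension and always intersect nontrivially; all one can do with a further transvection is reduce $\dim V'$ to the generic value. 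The paper handles this by iterating the perturbation with a second homoclinic point $\tz_2$ whose orbit is disjoint from that of $\tz$ (so the new bump does not disturb the first loop), replacing $V^s$ by $V'$, and so on; at most $d$ such independent homoclinic perturbations force the support to be empty. Your write-up needs both of these ingredients made explicit; without them the argument does not close.
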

\begin{proof}

Lets call the Lyapunov exponents of $(f_p,A'_p)$ relative to $\mu_K$ by $\lambda^1_c,\dots,\lambda^k_c$. By the positivity of the integrated Lyapunov exponents there 
 exists $i$ such that $\lambda^i_c(t)>\lambda^{i+1}_c(t)$ for some 
 $1\leq i\leq 2d$, as we have finite possibilities of $i$ we can take some 
 $\mu_K$ positive measure set $I\subset K$ such that, for every $t\in I$, there exists an invariant decomposition $\real^{2d}=V^+_t+V^-_t$ 
 with constant dimensions such that the smaller Lyapunov exponent in $V^+_t$ is larger than the largest Lyapunov exponent in $V^-_t$. Assume, without loose of generality, that $\dim(V^-)\geq \dim(V^+)$.
 
  We can also assume, reducing $I$ if necessary, that this decomposition varies continuously in $I$. As $h$ preserves the measure $\mu_K$ we can find an iterate $j$ such that $h^j(I)\cap I$ has positive measure, take $j$ to be the smaller integer with this property.
  
Suppose that $A'$ admits some $su$-invariant measure, by Theorem~\ref{t.sucinv} we can take the disintegration to be $su/c$-invariant.
By \cite[Proposition~3.1]{BP15} we have that, for every $t\in I$, $m_t=m^+_t+m^-_t$ where $\supp(m^+_t)\subset V^+_t$ and $\supp(m^-_t)\subset V^-_t$. 
Taking $t'\in I$ density point of $h^j(I)\cap I$ we can find a neighbourhood $N\subset K$ of $t'$ such that the sets $h^{u}_{p,z}(h^i(N))\subset \cW^c(z),i=0,\dots,j-1$ are disjoint. 

Let $H^{A'}_{t}$ be the composition of the unstable linear holonomy from $t$ to the point $h^u(t)$ and the stable linear 
holonomy from $h^u(t)$ to $h(t)$.

The $su/c$-invariance implies that $m_{h^j(t)}=(H^j_{t})_* m_t$ for $\mu^c_p$-almost every point. This implies that
$$
\supp(m_{h^j(t)})\subset ( V^+_{h^j(t)}\cup V^-_{h^j(t)})\cap (H^j_{t}V^+_t\cup H^j_{t}V^-_t).
$$

Fix a neighbourhood $\cN$ of $h^u(t')$ that does not contain $f^n(h^u(t'))$ for $n\neq 0$, making a small perturbation we find $\hA:M\to\Sp$ such that $\hA(x)=A'(x)$ outside $\cN$. We have that 
$${H^{\hA,j}_{t'}}=H^{A',j-1}_{h({t'})}\circ H^{s,A'}_{h^u({t'}),h({t'})}\circ \sigma \circ H^{u,A'}_{{t'},h^u({t'})}.
$$
Then lemma~\ref{lemma:symp_transv}  we can find $\sigma$ arbitrarily close to the identity such that 
$V^-_{h^j(t')}\cap H^j_{t'}V^+_{t'}=\{0\}$,  $V^-_{h^j(t')}\cap H^j_{t'}V^+_{t'}=\{0\}$ and $V^+_{h^j(t')}\cap H^j_{t'}V^+_{t'}=\{0\}$. So

$$
\supp(m_{h^j(t')})\subset V'_{h^j(t')}=V^-_{h^j(t)}\cap H^j_{t'}V^-_{t'}
$$
using lemma~\ref{lemma:symp_transv} again we can make $\dim V'<\dim V^-$. As $t'$ and $h^j(t')$ are density points of a set where the $V^+$ and $V^-$ varies continuously then there exists a positive measure subset of $t\in I$ such that $$
\supp(m_{h^j(t)})\subset V'_{h^j(t)}\quand \dim V'<\dim V^-.
$$ 

\begin{figure}[ht]
    \centering
    \includegraphics[scale=0.6]{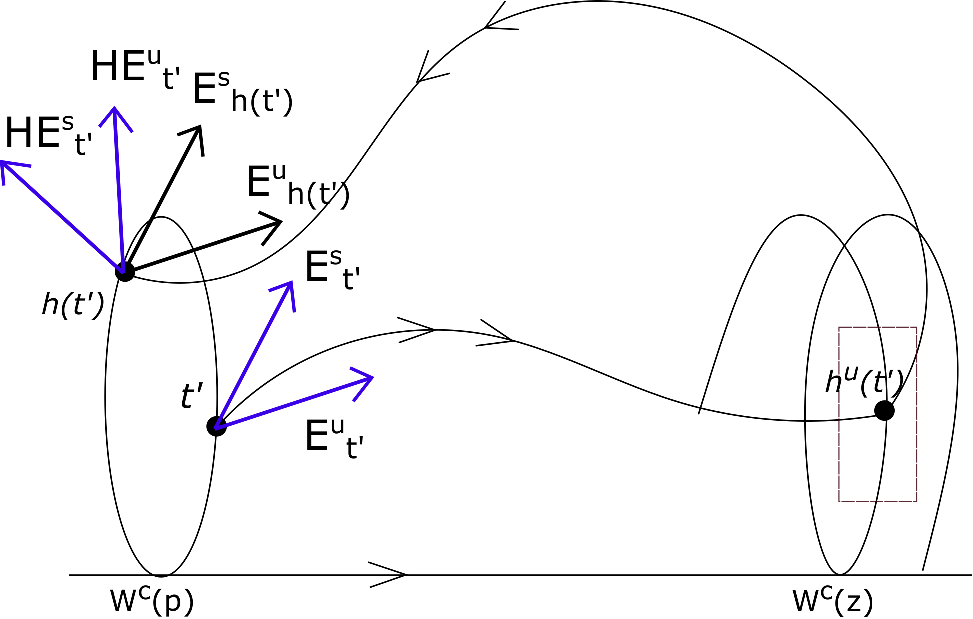}
    \caption{perturbation of $H$}\label{fig.perturb}
    \end{figure}

If $\hA$ still admits some $m$ $su$-invariant, we can repeat the argument taking a different homoclinic point $z_1,\dots,z_d$ (i.e the orbit of $z_i$ is not in the orbit of $z_j$ for $i\neq j$) with $V'$ instead of $V^-$ such that the new perturbation does not affect the cocycle in the orbit of $z$. Inductively, if the perturbed cocycles always admit some $su$-invariant measure, we can do this with
less than $d$-point $z_i$ to get that $\supp(m_t)=\emptyset$, for $t$ in a positive measure set, a contradiction. 

Then we can find $A''$ arbitrarily close to $A$ does not admit any $su$-invariant measure.

\end{proof}

Now we can prove Theorem \ref{teoprincipal}.
\begin{proof}[Proof of Theorem \ref{teoprincipal}]
 By Lemma \ref{close} there exists $A'$, arbitrary close to $A$, weakly pinching that does not admit any $su$-invariant measure, by the invariance principle (theorem~\ref{t.IP}) $A'$ has positive exponent and also by Proposition~\ref{prop.discontinuity} $A'$ is not accumulated by cocycles with zero exponents, then it is stably non-uniformly hyperbolic.
\end{proof}

\section{Proof of Theorem \ref{t.compact.leaves} and \ref{t.time.one}}\label{s.proofAB}
We need the following Proposition:

\begin{proposition}\label{p.densepinching}
Let $f:M\to M$ and $\mu$ be of class $\nA$ or $\nB$, then
there exist a dense set of $H^{\alpha}(M)$ weakly pinching.
\end{proposition}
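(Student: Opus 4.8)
The plan is to produce, for any given $A\in H^{\alpha}(M)$ and any $\varepsilon>0$, a weakly pinching $\hA$ with $\norm{\hA-A}_{\alpha}<\varepsilon$ by modifying $A$ only on an arbitrarily small neighbourhood of a single periodic center leaf. Fix an $\tf$-periodic point $\tp\in\tM$ of period $n$ (such points exist since $\tf$ is hyperbolic) and set $K=\cW^{c}(\tp)$, a compact center leaf with $f^{n}(K)=K$, and $\mu_{K}=\mu^{c}_{\tp}$; recall from Section~\ref{definitions} that ${f_{\tp}}_{\ast}\mu_{K}=\mu_{K}$, so $f^{n}\mid_{K}$ preserves the probability $\mu_{K}$. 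The cocycle attached to $K$ is $(f^{n}\mid_{K},A^{n}\mid_{K})$. If we alter $A$ only on a neighbourhood of $K$ disjoint from the remaining leaves $f(K),\dots,f^{n-1}(K)$ of the orbit of $K$, this cocycle becomes $(f^{n}\mid_{K},G\cdot B')$, where $G(x)=A^{n-1}(f(x))$ ($x\in K$) is unchanged and $B'\in H^{\alpha}(K,\Sp)$ is the new value of $A$ on $K$. Hence it suffices to find $B'$ arbitrarily $\alpha$-Hölder close to $A\mid_{K}$ so that $G\cdot B'$, viewed over $(f^{n}\mid_{K},\mu_{K})$, has positive top Lyapunov exponent; writing $\hat B=G\cdot B'$, which ranges over a neighbourhood of $A^{n}\mid_{K}$ as $B'$ ranges over a neighbourhood of $A\mid_{K}$ (since $G$ is a fixed $\Sp$-valued $\alpha$-Hölder function), this amounts to approximating the $\Sp$-cocycle $A^{n}\mid_{K}$ over $(f^{n}\mid_{K},\mu_{K})$ by one with $L(\cdot,\mu_{K})>0$.

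This approximation is exactly what the density theorems of Avila \cite{Av11} (for $\SL$) and Xu \cite{Xu15} (for $\Sp$) provide: cocycles with non-zero Lyapunov exponents are dense among cocycles over an arbitrary base system, in particular over $(f^{n}\mid_{K},\mu_{K})$, so $A^{n}\mid_{K}$ is approximated by some $\hat B$ with $L(\hat B,\mu_{K})>0$. To keep the approximation in the $\alpha$-Hölder topology one may first replace $A^{n}\mid_{K}$ by a nearby smooth cocycle and run the perturbation inside the smooth category, which is then $\alpha$-Hölder close; the degenerate case in which $\mu_{K}$ is carried by a periodic orbit of $f^{n}\mid_{K}$ is elementary, amounting to perturbing finitely many symplectic matrices so that their product acquires an eigenvalue off the unit circle.

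It remains to globalise $B'=G^{-1}\hat B$ to an $\hA\in H^{\alpha}(M)$, and this is where the fiber bundle hypothesis is used. I would take a trivialising homeomorphism $h_{\tp}:V\times K\to\pi^{-1}(V)$ around $\tp$, with $V$ small enough that $\pi^{-1}(V)$ misses $f(K),\dots,f^{n-1}(K)$, identify $K$ with $\{\tp\}\times K$ and $A\mid_{K}$ with $t\mapsto A(\tp,t)$, choose $\beta:\tM\to[0,1]$ smooth, supported in $V$, with $\beta(\tp)=1$, and set $\hA(x)=A(x)$ off $\pi^{-1}(V)$ and $\hA(h_{\tp}(\tx,t))=A(h_{\tp}(\tx,t))\cdot\exp(\beta(\tx)\log(A(\tp,t)^{-1}B'(t)))$ otherwise. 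For $B'$ close enough to $A\mid_{K}$ the logarithm is well defined and the exponential stays in $\Sp$, so $\hA\in H^{\alpha}(M)$, $\hA=A$ outside $\pi^{-1}(\supp\beta)$, $\hA\mid_{K}=B'$, and $\norm{\hA-A}_{\alpha}$ is bounded by $\norm{A(\tp,\cdot)^{-1}B'(\cdot)-\id}_{\alpha}$ times a constant depending only on $A$ and $\beta$, hence $<\varepsilon$; thus $\hA$ is weakly pinching and $\varepsilon$-close to $A$. As $A$ and $\varepsilon$ were arbitrary this gives density, and since fiber-bunchedness is an open condition, a fiber-bunched $A$ yields a fiber-bunched $\hA$, which is what is needed to combine with Theorem~\ref{teoprincipal} in the proof of Theorem~\ref{teo.AbD}. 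The only genuinely dynamical ingredient — and the main obstacle — is the creation of positive Lyapunov exponents for a cocycle over the zero-center-exponent map $f^{n}\mid_{K}$; this is delegated wholesale to Avila--Xu, everything else being localisation and bookkeeping built on the fiber bundle structure.
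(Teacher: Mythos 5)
Your plan — localise the perturbation near one periodic center leaf $K$ via the fiber-bundle chart, a bump function, and a $\log$/$\exp$ correction, and reduce to producing a positive exponent for a cocycle over $(f^{n}\mid_{K},\mu_{K})$ — is a reasonable and more explicit version of the reduction the paper makes, and in the \emph{non-degenerate} case (where $\mu_{K}$ is aperiodic for $f^{n}\mid_{K}$) the appeal to Avila and Xu is exactly what the paper does.

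The gap is in your treatment of the degenerate case, which you dismiss as ``elementary, amounting to perturbing finitely many symplectic matrices so that their product acquires an eigenvalue off the unit circle.'' This is wrong for two reasons. First, the paper's degenerate case is that $f^{k(\tp)}_{\tp}\mid_{\supp\mu^{c}_{\tp}}=\id$, which only says every point of $\supp\mu^{c}_{\tp}$ is $k(\tp)$-periodic; the measure $\mu^{c}_{\tp}$ need not be carried by finitely many points (it could be a continuous measure on a Cantor set of fixed points of $f^{k(\tp)}_{\tp}$). Second, and more seriously, the claim that one can always make a small perturbation of a symplectic matrix so it ``acquires an eigenvalue off the unit circle'' is false: the elliptic matrices form a nonempty \emph{open} subset of $\Sp$ (already in $\SL$: every matrix $C^{\alpha}$-close to $R_{\pi/2}$ has $\lvert\operatorname{tr}\rvert<2$ and hence spectrum on the unit circle). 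In particular, a perturbation supported only near $K$ — as in your construction — changes $A^{k(\tp)}(t)$ only to $A^{k(\tp)}(t)C(t)$ with $C(t)$ near the identity, and if $A^{k(\tp)}(t)$ is elliptic this never produces a positive exponent, no matter how small you take $\varepsilon$. This is precisely the obstruction the paper's argument is designed to circumvent: it perturbs \emph{globally} by post-composing with a rotation $A_{\theta}=R_{\theta}A$ and invokes Xu's Lemma~3.4, which gives an $O(1/n)$-dense set of angles $\theta$ with $L(A_{\theta}(t))>0$ at an $n$-periodic point; one then chooses a periodic center leaf $\tp$ with $k(\tp)$ large (possible since $\tf$ is a hyperbolic homeomorphism) so that a small $\theta$ already lies in that dense set. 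Taking the period large is essential, and your proposal does not choose $\tp$ with this in mind nor explain how to escape ellipticity with a small localized perturbation.
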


\begin{proof}

To prove Proposition \ref{p.densepinching} let us recall some results.

We say that an $f$-invariant measure $\mu$ is \emph{non-periodic} if for every $k\in \integer$ $f^k\mid_{\supp \mu}$ 
is not the identity map.
By Xu~\cite{Xu15} in a very general topology (including the $H^{\alpha}(M)$ topology) there exists a dense set of cocycles with
$L(A,\mu)>0$.
So if there exist some compact periodic center leaf $\cW^c(p)$ such that such that $\mu^c_{p}$ is non-periodic we are done.

If we are not in the previous case we can do the following:
first we need periodic points of arbitrarily large period,
\begin{itemize}
\item for class $\nA$: We are assuming that for every $\tf$-periodic point $\tp\in\tM$, there exist $k(\tp)$ such that
$f^{k(\tp)}_{\tp}\mid_{\supp \mu^c_{\tp}}=\id$. As $\tf$ is an hyperbolic homeomorphism there exist periodic points of arbitrary large period, then as $k(\tp)$ is at least the period of $\tp$ we have arbitrary large $k(\tp)$, 
\item for class $\nB$: If for every periodic point $p\in \supp(\mu)$, the invariant measure is periodic this means that $\frac{1}{T_p}$ is rational. $T_p$ can be taken arbitrarily large, so we have rational rotations with arbitrarily large period.
\end{itemize}

Observe also that in the periodic case the Lyapunov exponents at a point $t\in \cW^c(p)$ are the logarithm of the eigenvalues of $A^{k}(t)$, where $k$ is the period, lets call the $L(A(t))$ the logarithm of the largest eigenvalue of $A^{k}(t)$.

Now, taking 
$$
A_{\theta}=\left(\begin{array}{cc}
\cos(\theta)\id_d & \sin(\theta)\id_d\\
-\sin(\theta)\id_d &\cos(\theta)\id_d \ \\
\end{array}\right)
A,
$$ 
for a generic $A$ we have that for an $n$-periodic point there exists an $O(\frac{1}{n})$ dense set of $\theta\in [0,2\pi]$ with $L(A_{\theta}(t))>0$ (see \cite[Lemma~3.4]{Xu15}), so taking $\cW^c(p)$ such that $k$ is very large we can take $\theta$ small such that $A_\theta$ is close to $A$.
So we have at least one point $t\in \cW^c(p)$, in the support of $\mu^{c}_{p}$, such that $A^{k}$ has positive eigenvalues, then as all the points in $\cW^c(p)$ have period $k$, by continuity of the eigenvalues there exists an open set, containing $t$, with positive eigenvalues, then as this set has positive $\mu^c_{p}$ measure we have that $L(A\mid_{\cW^c(p)},\mu^c_{p})>0$, as we wanted.

\end{proof}

Now we can prove Theorem~\ref{t.compact.leaves} and \ref{t.time.one}
\begin{proof}
 Take any $A\in H^{\alpha}(M)$, by Proposition~\ref{p.densepinching} arbitrarily close to $A$
 there exists $\hA\in H^{\alpha}(M)$ weakly pinching, now by Theorem~\ref{teoprincipal} there exists an open 
 set with positive Lyapunov exponents arbitrarily close to $A$, then $A$ is accumulated by open sets with positive Lyapunov exponents.
\end{proof}

\section{Proof of Theorem \ref{teo.continuity}}\label{s.proof.continuity}

In $\SL$ we have that, if $\lambda^+(t)>0$, $\real^2=E^+_t+E^-_t$. Lets convention that, if $\lambda^+(t)=0$, $\real^2=E^+_t=E^-_t$.
We say that $A$ is \emph{Weakly twisting} if there exists $\tilde{K}\subset K$ with $\mu_{K}(\tilde{K})>0$ and $j\in\natural$ such that
 $(H^A_t)^j\left(\lbrace E^+_t,E^-_t \rbrace\right) \cap \lbrace E^+_{h^j(t)},E^-_{h^j(t)} \rbrace=\emptyset$.
By the same arguments of Lemma~\ref{close} it is clear that weakly twisting implies that $A$ does not admit any $su$-invariant measure.

To prove Theorem~\ref{teo.continuity} we need the next lemma, whose proof is in \cite{BP15}.
\begin{lemma}\label{teo.LyapvsOsel}
 Let $H^{\alpha}(M)$ be the space of $\alpha$-H\"older linear cocycles over a dynamical system $g:M\to M$ and let
 $\mu$ be an ergodic $g$-invariant measure. If $A$ is a continuity point for 
  $L(\cdot,\mu):H^{\alpha}(M)\to \real$   with $L(A,\mu)>0$, then the Oseledets spaces are continuous in measure. 
  This means that if $A_k\to A$ then, for every $\epsilon>0$, 
  $\mu\left(\lbrace t\in K\textrm{, such that } \angle\left(E^{u,A_k}_t,E^{u,A}_t \right) > \epsilon \rbrace \right) \to 0 $,
  where $\angle\left(V,W\right)$ is the angle between $V$ and $W$.
\end{lemma}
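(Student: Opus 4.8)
The plan is to realize each $A_k$ and $A$ on the projective extension $F:M\times\real P^1\to M\times\real P^1$ and to compare the associated ``unstable'' invariant measures. We are in the $\SL$ setting, so when $\lambda^+>0$ the Oseledets splitting is $\real^2=E^{u}_t\oplus E^{s}_t$ into two lines, and $M\times\real P^1$ is compact; we use the angle $\measuredangle(\cdot,\cdot)$ as the metric on $\real P^1$ (so $\measuredangle\le\pi/2$) and write $\phi_B(x,v)=\log(\norm{B(x)v}/\norm{v})$. Since $A$ is a continuity point with $L(A,\mu)>0$ we have $L(A_k,\mu)\to L(A,\mu)>0$, so for all large $k$ the cocycle $A_k$ has a well-defined top Oseledets line $\mu$-a.e.; let $m_k$ be the probability on $M\times\real P^1$ with disintegration $(m_k)_x=\delta_{E^{u,A_k}_x}$. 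Each $m_k$ is $F_{A_k}$-invariant, projects to $\mu$, and by Birkhoff together with Oseledets $\int\phi_{A_k}\,dm_k=L(A_k,\mu)$.

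First I would identify the weak-$\ast$ accumulation points of $(m_k)$. By compactness of the space of probabilities on $M\times\real P^1$, pass to a subsequence $m_k\to m$; since $A_k\to A$ uniformly, $m$ is $F_A$-invariant and projects to $\mu$, and since $\phi_{A_k}\to\phi_A$ uniformly (the quotients $\norm{A_k(x)v}/\norm{v}$ are uniformly bounded away from $0$ and $\infty$ and converge uniformly), using the continuity hypothesis one more time,
\[
\int\phi_A\,dm=\lim_k\int\phi_{A_k}\,dm_k=\lim_k L(A_k,\mu)=L(A,\mu).
\]
I then claim this forces $m$ to be the $u$-state of $A$, namely $m_x=\delta_{E^{u,A}_x}$ for $\mu$-a.e.\ $x$. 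Indeed, by the cocycle identity and $F_A$-invariance $\int\phi_A\,dm=\frac1n\int\log(\norm{A^n(x)v}/\norm{v})\,dm$ for every $n$; the integrand is uniformly bounded and, as $n\to\infty$, tends to $L(A,\mu)$ when $v\notin E^{s,A}_x$ and to $-L(A,\mu)$ when $v\in E^{s,A}_x$, so dominated convergence gives $\int\phi_A\,dm=L(A,\mu)(1-2\beta)$, where $\beta=m(\{(x,v):v\in E^{s,A}_x\})$; hence $\beta=0$. Applying $F_A$-invariance to $\psi(x,v):=\measuredangle(v,E^{u,A}_x)$ gives $\int\psi\,dm=\int\measuredangle(A^n(x)v,E^{u,A}_{g^nx})\,dm$ for every $n$, and for $m$-a.e.\ $(x,v)$ — namely those with $v\notin E^{s,A}_x$, a set of full $m$-measure since $\beta=0$ — this integrand tends to $0$ by the north--south behaviour of $A^n(x)$ on $\real P^1$; dominated convergence then yields $\int\psi\,dm=0$, i.e.\ $m_x=\delta_{E^{u,A}_x}$ a.e. As this limit is independent of the subsequence, the \emph{whole} sequence $(m_k)$ converges to it.

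The remaining step — and the main obstacle — is to turn weak-$\ast$ convergence $m_k\to m$ into convergence in measure of the lines $E^{u,A_k}_\cdot\to E^{u,A}_\cdot$: the map $x\mapsto E^{u,A}_x$ is only measurable, so $\psi$ is not continuous and weak-$\ast$ convergence does not test it directly. To handle this, fix $\delta>0$ and use Lusin's theorem to choose a compact $M_\delta\subset M$ with $\mu(M\setminus M_\delta)<\delta$ on which $x\mapsto E^{u,A}_x$ is continuous, and extend $\psi|_{M_\delta\times\real P^1}$ (via Tietze) to a continuous $\widetilde\psi_\delta:M\times\real P^1\to[0,\pi/2]$. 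Since $(m_k)_x=\delta_{E^{u,A_k}_x}$, $m_x=\delta_{E^{u,A}_x}$, and $\widetilde\psi_\delta$ integrates to $0$ against $m$ over $M_\delta\times\real P^1$,
\[
\int_{M_\delta}\measuredangle\!\big(E^{u,A_k}_x,E^{u,A}_x\big)\,d\mu\ \le\ \int\widetilde\psi_\delta\,dm_k\ \xrightarrow[k\to\infty]{}\ \int\widetilde\psi_\delta\,dm\ \le\ \tfrac{\pi}{2}\,\mu(M\setminus M_\delta)\ <\ \tfrac{\pi}{2}\,\delta,
\]
and bounding the integral over $M\setminus M_\delta$ trivially, $\limsup_k\int_M\measuredangle(E^{u,A_k}_x,E^{u,A}_x)\,d\mu\le\pi\delta$. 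Since $\delta$ is arbitrary, $\measuredangle(E^{u,A_k}_\cdot,E^{u,A}_\cdot)\to0$ in $L^1_\mu$, hence in measure by Markov's inequality, which is the statement of the lemma. Note that the hypothesis that $A$ is a continuity point is used only to obtain $\int\phi_A\,dm=L(A,\mu)$; everything else is soft compactness plus the hyperbolicity of the top exponent on the projective extension.
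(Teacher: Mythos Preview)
Your argument is correct. The paper itself does not prove this lemma; it simply states that ``the proof is in \cite{BP15}'' (Backes--Poletti), so there is no in-paper proof to compare against. Your route---passing to the projective cocycle, identifying every weak-$\ast$ accumulation point of the Dirac measures on $E^{u,A_k}$ as the unique $u$-state of $A$ via the Furstenberg-type identity $\int\phi_A\,dm=L(A,\mu)$, and then upgrading weak-$\ast$ convergence to $L^1_\mu$-convergence of the Oseledets sections by a Lusin/Tietze argument---is the standard one and is essentially what is done in the cited reference. Two small remarks: your use of dominated convergence in the step $\int\phi_A\,dm=L(A,\mu)(1-2\beta)$ is justified because $\|A\|,\|A^{-1}\|$ are bounded on the compact $M$, so $\tfrac1n\log(\|A^n(x)v\|/\|v\|)$ is uniformly bounded; and your subsequence argument is complete since you correctly note that the uniqueness of the limit forces convergence of the full sequence $(m_k)$.
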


We prove the same result for non ergodic measures:
\begin{lemma}\label{weakcont}
Assume that $A:K\to \SL$ is non-uniformly hyperbolic and a continuity point for Lyapunov exponents, 
then it is a continuity point, in measure, for the Oseledets decomposition.
\end{lemma}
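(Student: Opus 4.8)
The plan is to deduce this directly from Lemma~\ref{teo.LyapvsOsel}. Write $g=f_{\tp}:K\to K$ for the induced dynamics on the center leaf and $\mu_K=\mu^c_{\tp}$ for the invariant measure. Non-uniform hyperbolicity of $A:K\to\SL$ means $\lambda^+_A(t)>0$ for $\mu_K$-almost every $t$; in particular, at $\mu_K$-a.e.\ $t$ there is a splitting $\real^2=E^u_t\oplus E^s_t$ into one-dimensional Oseledets subspaces, and $L(A,\mu_K)=\int\lambda^+_A\,d\mu_K>0$.

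First I would check that being a continuity point for the Lyapunov exponents in the sense of the definition (convergence in measure of $\lambda^+_{A_k}$ towards $\lambda^+_A$) implies being a continuity point of the integrated exponent $L(\cdot,\mu_K)$ needed in Lemma~\ref{teo.LyapvsOsel}: if $A_k\to A$ then $\sup_k\norm{A_k}<\infty$, so the functions $\lambda^+_{A_k}$ are uniformly bounded, and convergence in measure together with the bounded convergence theorem gives $L(A_k,\mu_K)\to L(A,\mu_K)$. (If $\mu_K$ fails to be $g$-ergodic one runs the whole argument on each ergodic component and recombines, since the measures of the exceptional sets below integrate to the corresponding measure for $\mu_K$.) Thus Lemma~\ref{teo.LyapvsOsel}, applied with $g$ and $\mu_K$, gives that for every $A_k\to A$ and every $\epsilon>0$
$$\mu_K\big(\{t\in K:\measuredangle(E^{u,A_k}_t,E^{u,A}_t)>\epsilon\}\big)\longrightarrow 0,$$
where on the set $\{\lambda^+_{A_k}(t)=0\}$ we read $E^{u,A_k}_t=E^{s,A_k}_t=\real^2$ by convention; this set is $\mu_K$-asymptotically negligible because $\lambda^+_{A_k}\to\lambda^+_A>0$ in measure.

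It remains to obtain the analogous statement for the stable Oseledets space. For this I would pass to the adjoint cocycle $A^\ast$ over $g^{-1}$ generated by $t\mapsto A(g^{-1}(t))^{-1}$. One has $(A^\ast)^n(t)=A^{-n}(t)$, so the unstable Oseledets space of $A^\ast$ at $t$ is exactly $E^{s,A}_t$ and $\lambda^+_{A^\ast}=-\lambda^-_A$; since we are in $\SL$ we have $\lambda^-_A=-\lambda^+_A$, hence $A^\ast$ is again non-uniformly hyperbolic. Moreover $A\mapsto A^\ast$ is a homeomorphism of $H^\alpha(K)$ (composition with the fixed smooth map $g^{-1}$ followed by matrix inversion, both continuous for $\norm{\cdot}_\alpha$, the latter even linear on $\SL$-valued functions), so $A^\ast$ is a continuity point for the Lyapunov exponents whenever $A$ is. Applying Lemma~\ref{teo.LyapvsOsel} to $A^\ast$ over $g^{-1}$ gives $E^{s,A_k}_t\to E^{s,A}_t$ in measure. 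Together with the previous paragraph this shows that $A$ is a continuity point, in measure, for the full Oseledets decomposition $\real^2=E^u\oplus E^s$.

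The deduction is essentially mechanical; the only points requiring care are the passage from continuity in measure of $\lambda^+$ to continuity of the integrated exponent $L(\cdot,\mu_K)$ (uniform boundedness plus bounded convergence), the possible non-ergodicity of $\mu_K$ (handled by ergodic decomposition), and the identification $E^{s,A}=E^{u,A^\ast}$ together with the fact that the adjoint construction preserves both non-uniform hyperbolicity and the continuity-point property. None of these is a genuine obstacle.
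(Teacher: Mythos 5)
Your approach is essentially the paper's: reduce to Lemma~\ref{teo.LyapvsOsel} by working on ergodic components of $\mu_K$ and integrating; your preliminary observation that convergence in measure of $\lambda^+_{A_k}$ plus the uniform bound $\sup_k\norm{A_k}<\infty$ yields $L(A_k,\mu_K)\to L(A,\mu_K)$ is already contained in the remark following the paper's definition of continuity point. Your explicit treatment of the stable Oseledets space via the adjoint cocycle over $g^{-1}$ is in fact cleaner than the paper, which simply writes $*\in\{s,u\}$ when invoking Lemma~\ref{teo.LyapvsOsel} without spelling out the symmetry.

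However, the step you relegate to a parenthesis is exactly where the real work lies, and as written it has a gap. The hypothesis tells you $\lambda^+_{A_k}\to\lambda^+_A$ in $\mu_K$-measure. Since $\lambda^+$ is constant on each ergodic component $E$, this is the statement that $E\mapsto\lambda^+_{A_k}(E)$ converges to $E\mapsto\lambda^+_A(E)$ in measure with respect to the factor measure on the space $\cE$ of ergodic components; it does \emph{not} give $\lambda^+_{A_k}(E)\to\lambda^+_A(E)$ for a.e.\ $E$, hence does not directly give that $A$ is a continuity point of $L(\cdot,\mu_E)$ for a.e.\ $E$, which is what you need to ``run the whole argument on each ergodic component.'' Indeed $L(A_k,\mu_E)$ may oscillate up on some components and down on others while the $\mu_K$-average converges. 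The paper closes this gap by arguing by contradiction: assuming convergence in measure fails along some $A_n\to A$ with margin $\delta$, one extracts a subsequence along which $\lambda^+_{A_{n_k}}\to\lambda^+_A$ holds $\mu_K$-a.e., hence $\lambda^+_{A_{n_k}}(E)\to\lambda^+_A(E)$ for a.e.\ $E$; Lemma~\ref{teo.LyapvsOsel} is then applied componentwise along that subsequence, and dominated convergence produces the contradiction. Equivalently, one can keep your direct formulation but must first pass to an arbitrary subsequence and then to a further sub-subsequence with a.e.\ convergence on $\cE$ --- a subsequence principle your parenthetical omits. Without this extraction the application of Lemma~\ref{teo.LyapvsOsel} on almost every ergodic component is not justified.
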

\begin{proof}
Take an ergodic decomposition of $\mu_K$, $\{\mu_E,E\in \cE  \}$, where $\cE$ is the partition given by the ergodic decomposition.
Observe that if $t\in E$ then $\lambda^+_A(t)=\lambda^+_A(E)$.

Suppose by contradiction that there exist $\delta>0$ and $A_n\to A$ such that 
$\mu_K\{t\in K,\angle(E^{*,A_n}_t,E^{*,A}_t)>\delta   \}>\delta$ for $*=-$ or $+$. Here we use the convention that if 
$\dim(E)\neq \dim(F)$, $\angle(E,F)=\pi$.
Take a subsequence of $\lambda^+_{A_{n_k}}$ that converges for $\mu_K$-almost everywhere to $\lambda^+_{A}$.
This implies that $\lambda^+_{A_{n_k}}(E)\to \lambda^+_{A}(E)$ for almost every $E\in \cE$, 
by lemma~\ref{teo.LyapvsOsel} applied to every ergodic component we have that
$\mu_E\{t\in K,\angle(E^{*,A_{n_k}}_t,E^{*,A}_t)>\delta\}\to 0$, for almost every $E\in \cE$.
Then, by dominated convergence
$$
\mu_K\{t\in K,\angle(E^{*,A_{n_k}}_t,E^{*,A}_t)>\delta\}=\int \mu_E\{t\in K,\angle(E^{*,A_{n_k}}_t,E^{*,A}_t)>\delta\} \,d\mu_K
$$
converges to zero. 
This contradiction proves the Lemma.

\end{proof}

\begin{lemma}\label{stable}
 Let $A:M \to \SL$ be weakly twisting and weakly pinching, 
 and suppose that $A\mid_{K}:K\to \SL$ is a continuity point for the Lyapunov exponents. Then it is stable weakly twisting.
\end{lemma}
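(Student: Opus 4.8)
The plan is to show that weak twisting, once quantified on a positive-measure set, survives small perturbations of $A$, exploiting two facts: the strong holonomies depend continuously on the cocycle, and -- this is where the continuity-point hypothesis enters -- the a priori only measurable Oseledets splitting of $A|_K$ actually varies continuously in measure, by Lemma~\ref{weakcont}.

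First I would make weak twisting quantitative. By hypothesis there are $\tK\subset K$ with $\mu_K(\tK)>0$ and $j\in\natural$ such that $(H^A_t)^j(\{E^u_t,E^s_t\})\cap\{E^u_{h^j(t)},E^s_{h^j(t)}\}=\emptyset$ for every $t\in\tK$; note this already forces $\lambda^+_A(t)>0$ and $\lambda^+_A(h^j(t))>0$ for $\mu_K$-almost every $t\in\tK$, since at a point with zero exponent the two Oseledets ``lines'' degenerate to $\real^2$ and the intersection cannot be empty. For $t\in\tK$ let $\delta(t)$ be the minimum of the angular distances $\measuredangle\big((H^A_t)^jE,E'\big)$ over $E\in\{E^u_t,E^s_t\}$ and $E'\in\{E^u_{h^j(t)},E^s_{h^j(t)}\}$. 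This is a positive measurable function on $\tK$, so there are $\delta_0>0$ and $\tK_0\subset\tK$ with $\mu_K(\tK_0)>0$ and $\delta(t)>\delta_0$ for all $t\in\tK_0$. I would also record that the linear maps $(H^A_t)^j$ are invertible with norms and co-norms bounded uniformly in $t\in K$; this follows from the uniform bound in part (3) of the definition of the strong holonomies together with the compactness of $K$.

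Next, take any sequence $A_n\to A$ in $H^\alpha(M)$. Since the strong stable and unstable holonomies depend continuously on the cocycle, uniformly in the base points (see \cite{ASV_ORIG}), we get $(H^{A_n}_t)^j\to(H^A_t)^j$ uniformly in $t\in K$. Since $A$ is weakly pinching, $A|_K$ is non-uniformly hyperbolic, and by assumption it is a continuity point for the Lyapunov exponents; hence Lemma~\ref{weakcont} applies and the Oseledets decomposition of $A_n|_K$ converges in $\mu_K$-measure to that of $A|_K$. As $\mu_K$ is $h$-invariant, hence $h^j$-invariant, the same convergence holds when evaluated along $h^j$, that is $\measuredangle\big(E^{*,A_n}_{h^j(t)},E^{*,A}_{h^j(t)}\big)\to0$ in $\mu_K$-measure for $*\in\{s,u\}$ as well.

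Finally I would combine the three convergences. Fix $\epsilon>0$, small relative to $\delta_0$ and to the uniform bounds on $(H^A_t)^j$. For $n$ large one has $\|(H^{A_n}_t)^j-(H^A_t)^j\|<\epsilon$ for every $t$, and the set $G_n$ of those $t\in\tK_0$ with $\measuredangle(E^{*,A_n}_t,E^{*,A}_t)<\epsilon$ and $\measuredangle(E^{*,A_n}_{h^j(t)},E^{*,A}_{h^j(t)})<\epsilon$ for $*=s,u$ satisfies $\mu_K(G_n)>\tfrac12\mu_K(\tK_0)>0$. On $G_n$ every angle is $<\epsilon<\pi$, so the four Oseledets subspaces of $A_n$ are genuine lines within $\epsilon$ of those of $A$ (in particular $A_n|_K$ is non-uniformly hyperbolic at $t$ and at $h^j(t)$). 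An elementary continuity estimate for lines in $\real^2$ -- an $\epsilon$-perturbation of four lines and of a uniformly bi-Lipschitz linear map cannot close angular gaps that were larger than $\delta_0$, provided $\epsilon$ is small enough -- then gives
\[
(H^{A_n}_t)^j\big(\{E^{u,A_n}_t,E^{s,A_n}_t\}\big)\cap\{E^{u,A_n}_{h^j(t)},E^{s,A_n}_{h^j(t)}\}=\emptyset
\]
for every $t\in G_n$. Hence each $A_n$ is weakly twisting, with the same $j$ and witness set $G_n$, so a whole neighbourhood of $A$ consists of weakly twisting cocycles; in other words $A$ is stably weakly twisting. The main obstacle is precisely that the Oseledets splitting of $A|_K$ is only measurable, so weak twisting is an intrinsically measurable -- not topological -- condition on $K$; the hypothesis that $A|_K$ is a continuity point for the Lyapunov exponents is used, through Lemma~\ref{weakcont}, exactly to extract a single positive-measure set on which the Oseledets spaces of the perturbation, the holonomies, and the original angular gaps are all under control at once.
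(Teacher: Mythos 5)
Your proposal is correct and follows essentially the same route as the paper's proof: quantify the weak twisting by a positive angular gap $\delta_0$ on a positive-measure subset of $\tK$, invoke Lemma~\ref{weakcont} (plus the $h$-invariance of $\mu_K$) to control the Oseledets lines of the perturbed cocycle in measure at both $t$ and $h^j(t)$, use continuity of the strong holonomies with respect to the cocycle, and combine these three facts to show the angular gaps cannot close on a set of positive measure. The paper's version carries out the same plan with explicit constants $c$ and $\delta<\epsilon/6$; your observation that the twisting condition already forces $\lambda^+_A>0$ on $\tK$ is a small variant of the paper's step of simply restricting $\tK$ to the positive-exponent set $K'$, and is harmless.
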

\begin{proof}
 Let $K'\subset K$ be the set of $t$ such that $\lambda(A\mid_{K},\mu_K,t)>0$. 
 Reducing $\tilde{K}\subset K'$ given by the weakly twisting definition if necessary, we can assume that there exists $\epsilon>0$ such that 
 $$
 \min_{a,b\in \lbrace +,- \rbrace} \angle \left((H^A_t)^j E^{a}_t,E^{b}_{h^j(t)} \right)>\epsilon
 $$ 
 for every $t\in \tilde{K}$. Take $c>0$ such that $\mu_K(\tilde{K})>2c>0$ and 
 take $0<\delta<\frac{\epsilon}{6}$ such that $\angle\left((H^A_t) V,(H^A_t) W \right)< \frac{\epsilon}{6}$ 
for every $V,W \in \proj$ with $\angle\left(V,W \right)< \delta$.

Now, by the continuity of the Oseledets spaces given by Lemma~\ref{weakcont}, 
for every $B\in H^{\alpha}(M)$ sufficiently close to $A$ there exists $\hat{K}\subset K'$ with $\mu_K(\hat{K})>\mu_K(K')-\frac{c}{3}$ 
such that $\angle\left(E^{\ast,B}_t,E^{\ast,A}_t \right)< \delta $, $\ast\in\lbrace +,- \rbrace$.

Moreover, since in $H^{\alpha}(M)$, $A\mapsto H^A_t$ varies continuously with respect to $A$, 
we have that for $B$ sufficiently close to $A$ 
$$\angle\left(H^B_t E^{\ast,B}_t,H^A_t E^{\ast,B}_t \right)< \frac{\epsilon}{6}.$$
So, taking $K''=\hat{K}\cap \tilde{K}$ we have that $\mu_K(K'')>\frac{2c}{3}$ and 
$\mu_K(h^j(K'')\cap \hat{K})>\frac{c}{3}$.
Therefore for every $t\in h^{-j}\left(h^j(K'')\cap \hat{K}\right)$ we have 
$$
\angle\left((H^B_t)^j E^{\ast,B}_t,(H^A_t)^j E^{\ast,A}_t \right)< \frac{\epsilon}{3}
\quand
\angle\left(E^{\ast,B}_{h^j(t)},E^{\ast,A}_{h^j(t)} \right)< \frac{\epsilon}{3}.
$$
Consequently, 
$$
\min_{a,b\in \lbrace +,- \rbrace} \angle\left((H^B_t)^j E^{a,B}_t,E^{b,B}_{h^j(t)} \right)>\frac{\epsilon}{3}.
$$
\end{proof}

Now we can prove Theorem \ref{teo.continuity}.
\begin{proof}[Proof of Theorem \ref{teo.continuity}]
 By Lemma \ref{close} there exists $\hA$, arbitrary close to $A$, 
 with the weakly twisting and weakly pinching property, such that $A\mid_{K}=\hA\mid_K$. 
 By Lemma \ref{stable} $\hA$ is stable weakly twisting then, by Proposition~\ref{prop.discontinuity}, $\hA$ is a continuity point for the Lyapunov exponents.
\end{proof}

\begin{proof}[Proof of Theorem \ref{teo.example.cont}]
Take the periodic point $p\in M$ (for simplicity assume that it is fixed) and consider $K=\{p\}\times S^1$. 
Let $r_{\theta(p)}:S^1\to S^1$ be the irrational rotation $t\mapsto t+\theta(p)$. 

By the unique ergodicity of the irrational rotation $\mu_K$ is the Lebesgue measure on $S^1$. So by
hypothesis $\big(r_{\theta(p)},B(p,\cdot)\big)$ is a $\epsilon$-monotonic quasi-periodic cocycle.
Also, by \cite{Av11}, we can find $\tB$ arbitrarily close to $B$ such that $L(\tB(p,\cdot),\mu_K)>0$. 
Using \cite[Theorem~3.8]{AvKr13} we have that $\big(R_{\theta(p)},\tB(p,\cdot)\big)$
 is a continuity point for the Lyapunov exponents. Then $(f,\tB)$ 
 falls into the hypotheses of Theorem~\ref{teo.continuity}. Hence for every $(f,B)\in H_p^{\epsilon}$
  there exists an open subset of $H^{\epsilon}_p$ arbitrarily close to  $B$  of continuity points for the Lyapunov exponents.    
\end{proof}

The proof of Theorem~\ref{t.continuity.B} is analogous.
\appendix
\section{closedness of $s$ and $u$-states}\label{appendix}
In the study of Lyapunov exponents of linear cocycles over hyperbolic or partially hyperbolic maps one of the principal tools to prove positivity, simplicity or continuity is to analyze the invariant measures of the cocycle that projects to some fixed invariant measure in the base.

With some conditions that allow the existence of linear stable and unstable holonomies, having zero exponents (in some cases also discontinuity) can be caracterized by some rigidity condition in the invariant measures of the cocycles, this is known as the \emph{Invariance Principle} (see \cite{Extremal}). This rigidity condition says that the measures must be $s$ and $u$-states, this means that the disintegration is invariant by the holonomies (see section~\ref{s.cocycles} for the precise definition).

In many works closedness of $s$ or $u$-states has been proved and used for specific cases (\cite{BBB}, \cite{Extremal}, \cite{ASV13}). The purpose of this appendix is to give a more general proof of this fact for general partially hyperbolic maps without any extra conditions on invariant measure of the base map. 

The precise statement of the main result is given in theorem~\ref{teo}.

\subsection{Smooth cocycles}\label{s.cocycles}
Let $\cE$ be a compact manifold, and let $F:M\times \cE\to M\times \cE$ be a smooth cocycle over $f$, this means that if $P:M\times \cE\to M$ is the natural projection to the first coordinate $P\circ F=f\circ P$ and $x\mapsto F_x$ is H\"older continuous to the topology of $C^r$ diffeomorphisms.

We say that $F$ admits \emph{stable holonomies} if for every $x,y\in M$, $x\rels y$, there exists $H^s_{x,y}:\cE\to\cE$ with the following properties: 
\begin{itemize}
\item $x\rels y\rels z$, $H^s_{x,z}=H^s_{y,z}\circ H^s_{x,y}$ and $H^s_{x,x}=\id$,
\item $F_y\circ H^s_{x,y}=H^s_{f(x),f(y)}\circ F_x$,
\item $(x,y,\xi)\to H^s_{x,y}(\xi)$ is continuous where $(x,y)$ varies in the set of points $x\rels y$,
\item there exist $C>0$ and $\gamma>0$ such that $ H^s_{x,y}$ is $(C,\gamma)$ H\"older for every $x\rels y$. 
\end{itemize}
Analogously we say that $F$ admits \emph{unstable holonomies} if for every $x\relu y$ there exist $ H^u_{x,y}$ with the same properties changing stable by unstable.
 
From now on fix $f$ and vary the cocycles $F$ projecting to $f$ in a topology such that $x,y,F\mapsto H^{s,F}_{x,y}$ varies continuously. 

Fix some $f$-invariant probability measure $\mu$, as $\cE$ is compact there always exists some $F$-invariant probability measure $m$ that projects to $\mu$. By Rokhlin disintegration theorem, we can disintegrate $m$ with respect to the partition given by the fibers $\{x\} \times \cE$, so we have $x\mapsto m_x$ defined almost everywhere.  

We say that an $F$-invariant measure that projects to $\mu$ is an \emph{$s$-state} if there exists a total measure subset $M'\subset M$ such that for every $x,y \in M'$, $x\rels y$, ${H^s_{x,y}}_*m_x=m_y$. Analogously, we say that a measure is an \emph{$u$-state} is the same is true changing stable by unstable manifolds. We call $m$ an $su$-state if it is booth $s$ and $u$-state.

We want to prove that
\begin{theorem}\label{teo}
If $m^k$ are $s$-states for $F_k$, that projects to $\mu$ such that $F_k\to F$ and $m^k\to m$  in the weak$^*$ topology then $m$ is an $s$-state. 
\end{theorem}

By \cite[theorem~4.1]{ASV13} if a cocycle $F$ has all his Lyapunov exponents equal to zero, then the $F$-invariant measure $m$ is an $su$-state. As a corollary we have
\begin{corollary}
If $F$ does not admit any $su$-state, then there exists a neighborhood of $F$ with non-zero exponents.
\end{corollary}

\subsection{Proof}

First we need to recall the Markov construction of \cite{ASV13}:
Given any point $x\in \supp(\mu)$ we can find some section $\Sigma$ transverse to the stable foliation, some $N>0$, $R>0$, $0<\delta<\frac{R}{2}$ and a measurable family $\lbrace S(z),z\in \Sigma \rbrace$ such that 
\begin{itemize}
\item $\cW^s(z,\delta)\subset S(z)\subset \cW^s(z,R)$ for all $z\in \Sigma$,
\item for all $l\geq 1$, $z,\zeta\in \Sigma$, if $f^{lN}(S(z))\cap S(\zeta)\neq \emptyset$ then $f^{lN}(S(z))\subset S(\zeta)$.
\end{itemize}
As taking an iterate will not affect our argument we suppose that $N=1$.

For each $z\in \Sigma$, let $r(z)$ be the largest integer such that $f^j(S(z))$ does not intersect any $S(w)$ for all $w\in \Sigma$, $0<j\leq r(z)$. Now let $\cB_0$ be the $\sigma$-algebra of sets $E\subset M$ such that for every $z$ and $j$ as before, either $E$ contains $f^j(S(z))$ or is disjoint from it. A $\cB_0$-mensurable function, is a function that is constant on the sets $f^j(S(z))$, $0\leq j\leq r(z)$.

For every $k\in\natural$, let $\cH_k:M\times\cE\to M\times \cE$ be defined by $(x,\xi)\to (x,{H_k}_x(\xi))$ where
\begin{equation}
{H_k}_x=\left\lbrace\begin{array}{cc}
H^{s,k}_{x,f^j(z)} & \text{ if }x\in f^j(S(z))\text{ for some }z\in \Sigma \\
\id & \text{otherwise}
\end{array}\right.
\end{equation}
where $H^{s,k}_{x,z}$ is the stable holonomy of $F_k$.

Now as in \cite{ASV13} we can change our cocycle by
$\tilde{F}_k=\cH_k F_k (\cH_k)^{-1}$, this is called a deformation cocycle of $F_k$, such that $x\mapsto \tilde{F_k}_x$ is $\cB_0$ measurable.

Let $m^k$ be an $F_k$-invariant measure, define $\tm^k=\cH^k_* m^k$, this measure is $\tilde{F}_k$-invariant.
Observe that $m^k$ being an $s$-state implies  that $x\mapsto m^k_x$ is $\cB_0$ measurable. Moreover, $m^k$ is an $s$-state if and only if this is true for every $z\in M$ and $\Sigma\ni z$ transversal to the stable foliation (this is explained in more detail in \cite[Section~4.4]{ASV13}).

\begin{lemma}\label{l.measurable1}
Let $\phi:M\times\cE\to \real$ be a measurable bounded function such that $v\mapsto \phi(x,v)$ continuous, then $\int \phi d m^k\to \int \phi d m$.
\end{lemma}
\begin{proof}
Fix $\varepsilon>0$ and take a compact set $K\subset M$ such that $\mu(K)>1-\frac{\varepsilon}{\norm{\phi}}$ and $\phi$ is continuous in $K\times \cE$, take   $\phi':M\times \cE \to \real$ be a continuous function such that $\phi(x,v)=\phi'(x,v)$ for every $x\in K$, $v\in \cE$ and $\norm{\phi'}\leq \norm{\phi}$. 

Now
take $k$ sufficiently large such that $\abs{\int \phi' d m^k- \int \phi' d m}<\epsilon$, then 
$$
\abs{\int \phi d m^k- \int \phi d m}\leq \abs{\int \phi' d m^k- \int \phi' d m}+2 \varepsilon.
$$

So for $k$ sufficiently large this is less than $3\varepsilon$, concluding the proof.
\end{proof}

\begin{lemma}
If $m^k\to m$ then $\tm^k\to \tm=\cH_* m$.
\end{lemma}
\begin{proof}
Let $\varphi:M\times \cE\to \real$ be a continuous function, then
$$
\int \varphi d\tm^k=\int \varphi\circ \cH_k d m^k,
$$
$\varphi\circ \cH$ is measurable but $v\mapsto \varphi\circ \cH(x,v)$ is continuous for every $x\in M$, then by lemma~\ref{l.measurable1} we have that 
\begin{equation}\label{eq.limite}
\int \varphi\circ \cH d m^k\to \int \varphi d \tm.
\end{equation}

Fix some $\epsilon>0$ and take $\delta>0$ such that $\d(a,b)<\delta$ implies that $\abs{\varphi(a)-\varphi(b)}<\epsilon$.
Now, the uniform convergence of the holonomies implies that for $k$ sufficiently large 
\begin{equation}\label{eq.uniform}
\d(\cH_k(x,v),\cH(x,v))<\delta.
\end{equation}

Observe that
$$
\abs{\int \varphi\circ \cH_k d m^k- \int \varphi d \tm}\leq 
\abs{\int \varphi\circ \cH_k d m^k- \int \varphi\circ\cH d m^k}+\abs{\int \varphi\circ \cH d m^k- \int \varphi d \tm}
$$
then by \eqref{eq.limite} and \eqref{eq.uniform} we have that for $k$ sufficiently large 
$$
\abs{\int \varphi\circ \cH_k d m^k- \int \varphi d \tm}<2\varepsilon.
$$
\end{proof}

So we are left to prove that $\tm^k\to \tm$ implies that $x\mapsto \tm_x$ is also $\cB_0$ measurable.

Let $\cB_0$ be a $\sigma$-algebra, let $\mu$ be a measure in $M$. Assume that we have some measures $\tm^k$ in $M\times \cE$ converging in the weak$^*$ topology to $\tm$ and let $P:M\times \cE \to M$ be the natural projection, also assume that $P_* \tm^k=\mu$. 

The next lemma is a corollary of lemma~\ref{l.measurable1}.
\begin{lemma}\label{l.measurable}
Let $\phi:M\to \real$ be a measurable function and $\varphi:\cE\to \real$ continuous, then $\int \phi\times \varphi d \tm^k\to \int \phi \times \varphi d\tm$.
\end{lemma}
Suppose that $x\mapsto \tm^k_{x}$ is $\cB_0$ measurable, this is true if and only if for every continuous function $\varphi:\cE\to \real$, $x\mapsto \int \varphi d\tm^k_{x}$ is $\cB_0$ measurable.

First we need the next lemma
\begin{lemma}\label{l.closed}
Let $\phi_k$ be a sequence of function in $L^2(\mu)$ that is $\cB_0$ measurable such that $\phi_k$ converges weakly to $\phi$, then $\phi$ is $\cB_0$ measurable.
\end{lemma}
\begin{proof}
First observe that the space of $\cB_0$ measurable functions is closed and convex in $L^2(\mu)$, lets call this space by $H\subset L^2(\mu)$. Suppose that $\phi\notin H$ then by Hahn-Banach there exist some $\rho\in L^2(\mu)$ such that $\int \rho \xi d\mu=0$ for every $\xi\in H$ and $\int \rho \phi d\mu>0$.
A contradiction because $\int \rho \phi_k d\mu\to\int \rho \phi d\mu$
\end{proof}
Now to conclude the proof of theorem~\ref{teo} we prove:
\begin{proposition}
$x\mapsto \int \varphi d\tm_{x}$ is $\cB_0$ measurable.
\end{proposition}
\begin{proof}
Take $\phi_k(x)=\int \varphi d\tm^k_{x}$, this function bounded then it is in $L^2(\mu)$. For any function $\rho\in L^2(\mu)$ we have that 
$$
\int \rho\phi_k d \mu=\int \rho(x)\int \varphi(v)d\tm^k_{x}(v)d\mu=\int \rho\times \varphi d\tm^k
$$
So by lemma~\ref{l.measurable} we have that 
$$
\int \rho\phi_k d \mu\to \int \rho\phi d \mu,
$$
where $\phi(x)=\int \varphi d\tm_{x}$. By hypothesis $\phi_k$ is $\cB_0$ measurable, then by lemma~\ref{l.closed} $\phi$ is $\cB_0$ measurable.
\end{proof}

\section*{Acknowledgments} Thanks to M. Viana for the orientation, E. Pujals, C. Matheus and Disheng Xu for the discussions and useful ideas. L. Backes, F. Lenarduzzi, K. Marin and A. Sanchez for the useful commentaries. Mateus Sousa for the idea of the proof of lemma~\ref{l.closed}.


\bibliography{bib}
\bibliographystyle{plain}

\end{document}